\def\crn#1#2{{\vcenter{\vbox{
        \hbox{\kern#2pt \vrule width.#2pt height#1pt
           }
          \hrule height.#2pt}}}}
\newcommand{\stopthm}{\hfill$\square$\medskip}
\newcommand{\pa}{\partial}
\newcommand{\Ric}{\operatorname{Ric}}
\newcommand{\R}{\mathbb R}
\newcommand{\N}{\mathbb N}
\newcommand{\ep}{\epsilon}
\newcommand{\ga}{\gamma}
\newcommand{\Wb}{\overline{W}}
\newcommand{\nb}{\bar{n}}
\newcommand{\ft}{\widetilde{f}}
\newcommand{\Dt}{\widetilde{\Delta}} 
\newcommand{\Ut}{\widetilde{U}}
\newcommand{\gt}{\widetilde{g}}
\newcommand{\cL}{\mathcal{L}}
\newcommand{\cLb}{\bar{\mathcal{L}}}
\newcommand{\cR}{\mathcal{R}}
\newcommand{\cM}{\mathcal{M}}
\newcommand{\cD}{\mathcal{D}}
\newcommand{\cMt}{\widetilde{\mathcal{M}}}
\theoremstyle{plain}
\newtheorem{theorem}{Theorem}[section]
\newtheorem{lemma}[theorem]{Lemma}
\newtheorem{proposition}[theorem]{Proposition}
\theoremstyle{definition}
\theoremstyle{remark}
\numberwithin{equation}{section}
\title[Juhl's Formulae]{Juhl's Formulae for GJMS Operators and
  $Q$-curvatures}   
\author{Charles Fefferman}
\address{Department of Mathematics, Princeton University\\
Princeton, NJ 08544}
\email{cf@math.princeton.edu} 
\author{C. Robin Graham}
\address{Department of Mathematics, University of Washington,
Box 354350\\
Seattle, WA 98195-4350}
\email{robin@math.washington.edu}
\begin{document}

\maketitle

\thispagestyle{empty}

\renewcommand{\thefootnote}{}
\footnotetext{Partially supported by NSF grants DMS 0901040 and DMS 
  0906035.}     
\renewcommand{\thefootnote}{1}

\section{Introduction}\label{intro}

GJMS operators and $Q$-curvatures are important objects in conformal 
geometry which have been studied intensely during the past decade.  In
\cite{J2}, \cite{J3}, building on previous work beginning with \cite{J1},  
Juhl has derived remarkable formulae for GJMS operators and  
$Q$-curvatures which reveal unexpected algebraic structure.  In this paper
we give direct proofs of Juhl's formulae starting from the original  
construction of \cite{GJMS}.   

Juhl's formulae are expressed in terms of quantities arising in the  
expansion of a Poincar\'e metric, or equivalently an ambient metric,
associated to a given pseudo-Riemannian metric.   
Let $g$ be a pseudo-Riemannian metric of signature $(p,q)$, $p+q=n\geq 3$,
on an $n$-dimensional manifold $M$.     
A Poincar\'e metric in normal form relative to $g$ is a metric $g_+$ on
$M\times (0,\epsilon)$ of the form
$$
g_+=r^{-2}\left(dr^2+h_r\right),
$$
where $h_r$ is a smooth 1-parameter family of metrics on $M$ satisfying
$h_0=g$, for which $\Ric(g_+)+ng_+=0$ in the following asymptotic sense.  
If $n$ is odd, then $\Ric(g_+)+ng_+=O(r^\infty)$, while if $n$ 
is even, then $\Ric(g_+)+ng_+= O(r^{n-2})$ and the tangential trace of 
$r^{2-n}\left(\Ric(g_+)+ng_+\right)$ vanishes at $r=0$.  Set 
$$
V(r)=\sqrt{\frac{\det h_r}{\det h_0}}
$$
and $W(r)=\sqrt{V(r)}$.  
Let $\delta$ denote the divergence operator on vector fields with respect
to $g$, given by $\delta \varphi= \nabla_i\varphi^i$.   
Define a 1-parameter family $\cM(r)$ of second order differential operators
on $M$ by 
\begin{equation}\label{Ddef}
\cM(r)=\delta (h_r^{-1}d) -U(r),
\end{equation}
where 
$$
U(r)=\frac{\left[\pa_r^2 
-(n-1)r^{-1}\pa_r+\delta (h_r^{-1}d)\right]W(r)}{W(r)}
$$
acts as a zeroth order term.  (We write $U(r)$ 
in the form given in v1 of \cite{J3}.  v2 of \cite{J3} expresses it in a  
different form; see Lemma 8.1 of v2.)   
Use $\cM(r)$ as a generating function for second order differential
operators 
$\cM_{2N}$ on $M$ defined for $N\geq 1$ (and $N\leq n/2$ if $n$ is even) by  
\begin{equation}\label{Mdef}
\cM(r)= \sum_{N\geq 1}\cM_{2N}\frac{1}{(N-1)!^2}  
\left(\frac{r^2}{4}\right)^{N-1}.  
\end{equation}
The $\cM_{2N}$ are natural scalar differential operators.  
Natural scalar invariants $W_{2N}$ are defined by
\begin{equation}\label{Wdef}
W(r)= 1+\sum_{N\geq 1} W_{2N}r^{2N}
\end{equation}
for $N\geq 1$ (and $N\leq n/2$ if $n$ is even).  

Juhl's formulae 
involve constants $n_I$, $m_I$ which are parametrized by ordered lists 
$I=(I_1,\ldots,I_r)$ of positive integers.  $I$ is referred to as a
composition of the sum $|I|=I_1+I_2+\cdots +I_r$.  
Sometimes compositions are written in the form   
$(I,a)$ singling out the last entry.  In this case the convention is  
that $I$ is allowed to be empty but $a>0$.   
The constants appearing in Juhl's formulae are:
\begin{equation}\label{nm}
\begin{split}
n_I&=(|I|-1)!^2\prod_{j=1}^r\frac{1}{(I_j-1)!^2}
\prod_{j=1}^{r-1}\frac{1}
{\left(\sum_{k=1}^jI_k\right)\left(\sum_{k=j+1}^rI_k\right)}\\
m_I&=(-1)^{r+1}|I|!(|I|-1)!\prod_{j=1}^r\frac{1}{I_j!(I_j-1)!}
\prod_{j=1}^{r-1}\frac{1}{I_j+I_{j+1}}.
\end{split}
\end{equation}
Empty products are always interpreted as 1.  Observe when $r=1$ that
$n_{(N)}=m_{(N)}=1$.  

Let $P_{2N}$ denote the GJMS operators, with sign convention determined by 
$P_{2N}=\Delta^N + \ldots$ with $\Delta = \delta(g^{-1}d)$.  These are
defined for all $N\geq 1$ for $n$ odd and for $1\leq N\leq n/2$ for $n$
even.  Iterated compositions of the $P_{2N}$ and the $\cM_{2N}$ are
denoted by $P_{2I}=P_{2I_1}\circ \cdots \circ P_{2I_r}$ and 
$\cM_{2I}=\cM_{2I_1}\circ \cdots \circ \cM_{2I_r}$.  

Juhl proves four formulae:  an explicit formula and a recursive
formula each for GJMS operators and for $Q$-curvatures.  All four formulae
are universal in the dimension.   

\begin{theorem}\label{GJMSexplicit}{\bf Explicit formula for GJMS
    operators.} 
For $N\geq 1$ (and $N\leq n/2$ if $n$ is even),
\begin{equation}\label{opexform}
P_{2N}=\sum_{|I|=N}n_I\cM_{2I}.
\end{equation}
\end{theorem}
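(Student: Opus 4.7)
The plan is to derive the formula from the GJMS construction after establishing a conjugation identity that turns the Poincar\'e eigenvalue problem into an equation explicitly involving $\cM(r)$. The central step is to prove that for any smooth $v=v(r,x)$ and any $s$,
\[
(\Delta_++s(n-s))(W^{-1}v)=W^{-1}\bigl\{r^2\pa_r^2v-(n-1)r\pa_rv+r^2\cM(r)v+s(n-s)v\bigr\}.
\]
To verify it, substitute $u=W^{-1}v$ into the explicit expression for $\Delta_+u$ on $g_+=r^{-2}(dr^2+h_r)$ (using $V=W^2$). The terms $W^{-3}(W')^2v$ and $W^{-2}W'\pa_rv$ produced by $r^2\pa_r^2u$ cancel those from $2r^2(W'/W)\pa_ru$, while the spatial part collapses via Leibniz to $r^2W^{-1}\delta(h_r^{-1}dv)-r^2W^{-2}v\,\delta(h_r^{-1}dW)$. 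The remaining $v$-coefficient then organizes, using the defining relation $L(r)W=U(r)W$, to $r^2W^{-1}[L(r)-U(r)]v=r^2W^{-1}[\pa_r^2-(n-1)r^{-1}\pa_r+\cM(r)]v$.

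Now take $s=n/2-N$ and $v=r^{n/2-N}G$ with $G=\sum_{k\geq0}g_kr^{2k}$ and $g_0=f$. Writing $\tau=r^2$ and using $\pa_r^2=2\pa_\tau+4\tau\pa_\tau^2$, the identity reduces to
\[
(\Delta_++s(n-s))(W^{-1}r^{n/2-N}G)=r^{n/2-N+2}W^{-1}\bigl\{\cM(\sqrt\tau)G+4\tau\pa_\tau^2G-4(N-1)\pa_\tau G\bigr\}.
\]
By the GJMS construction, $P_{2N}f$ is, up to the explicit constant $c_N:=4^{N-1}(N-1)!^2$, the obstruction at order $r^{n/2+N}$ to solving $(\Delta_++s(n-s))u=O(r^{n/2+N})$. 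Writing $\cM(\sqrt\tau)=\sum_{M\geq1}\mu_M\tau^{M-1}$ with $\mu_M:=\cM_{2M}/[4^{M-1}(M-1)!^2]$ and extracting the $\tau^j$ coefficient of the braces yields the recursion
\[
g_{j+1}=\frac{1}{4(j+1)(N-1-j)}\sum_{M=1}^{j+1}\mu_Mg_{j+1-M},\qquad 0\leq j\leq N-2,
\]
while at $j=N-1$ the coefficient of $g_N$ vanishes, leaving the obstruction $H=\sum_{M=1}^N\mu_Mg_{N-M}$ with $P_{2N}f=c_NH$.

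Unrolling the recursion expresses $g_k=\sum_{|J|=k}a_J\cM_{2J}f$ with
\[
a_J=\prod_{j=1}^r\frac{1}{4^{J_j}(J_j-1)!^2\,s_j(N-s_j)},\qquad s_j:=J_j+J_{j+1}+\cdots+J_r.
\]
Substituting into $c_NH$ and writing each composition $I$ of $N$ as $(I_1,I')$, the coefficient of $\cM_{2I}$ equals $c_Na_{I'}/[4^{I_1-1}(I_1-1)!^2]$; the scattered factors $4^{J_j}$ combine to $4^{N-1}$, giving $c_N/4^{N-1}=(N-1)!^2$, and the tail sums $s_k^{I'}=\sum_{m=k+1}^rI_m$ pair with $N-s_k^{I'}=\sum_{m=1}^kI_m$, reproducing precisely the constant $n_I$ of \eqref{nm}. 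The critical case $N=n/2$ for even $n$ then follows by universality in the dimension, since both sides are dimension-universal polynomial expressions that agree for $N<n/2$ in arbitrary dimension. The main obstacle is the conjugation identity itself: the delicate cancellations that produce exactly $\cM(r)$, with no spurious first-order term $\langle dW,dv\rangle_{h_r}$, depend crucially on the choice $W=\sqrt V$ and on $U$ being defined as $L(r)W/W$ rather than any other natural candidate.
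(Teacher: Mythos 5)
Your proof is correct, and it reaches the explicit formula by a route that is genuinely different from the paper's in its combinatorial half, though it shares the same analytic engine. The conjugation identity you establish for $\Delta_+$ is the Poincar\'e-metric incarnation of the paper's key formula \eqref{Dvformula}: conjugating $\Delta_+$ by $W=\sqrt V$ corresponds exactly to conjugating the ambient Laplacian $\Dt$ by $v^{1/2}$, and the disappearance of both the cross term $\langle dW,dv\rangle_{h_r}$ and of $|dW|^2_{h_r}$ in the spatial Laplacian is precisely what the paper proves in Lemma~\ref{lapla} via the self-adjointness argument. Where you diverge is in the choice of extension $\ft$. The paper takes $\ft$ to be $\rho$-independent and must therefore evaluate $\cR_{1-N}\cdots\cR_{N-1}f|_{\rho=0}$ head-on; identifying the resulting coefficients as $n_I$ is then the nontrivial combinatorial Theorem~\ref{combthm}, proved via the generating functions $F_l$, the differential operators $\cD_m$, and the Jacobi-type polynomials of Proposition~\ref{recusolve}. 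You instead take $\ft=G$ to be the formal solution of the degenerate ODE, so that $\cR_{N-1}G=H\tau^{N-1}+O(\tau^N)$. This makes the remaining $N-1$ operators $\cR_{1-N},\ldots,\cR_{N-3}$ act only through their first-order parts, producing the pure constant $\prod_{m=1}^{N-1}4m(N-m)=4^{N-1}(N-1)!^2$, and the combinatorics reduces to mechanically unrolling a lower-triangular recursion. The unrolling and the matching with $n_I$ you carry out are correct; your choice of extension has replaced the paper's Proposition~\ref{recusolve} by something essentially trivial, which is a real simplification.

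Two points deserve tightening. First, you assert the normalization $P_{2N}f=c_N H$ with $c_N=4^{N-1}(N-1)!^2$ ``by the GJMS construction,'' but the construction of \cite{GJMS} is the ambient one and does not directly hand you the Poincar\'e obstruction picture with that constant. You should either invoke the scattering-theoretic equivalence explicitly, or — more in the spirit of the paper and requiring no external input — derive it from \eqref{Pdefnew} and \eqref{Dvformula}: with $\ft=G$ the remaining factors $\cR_{N-1-2m}$, $1\le m\le N-1$, applied to $H\tau^{N-1}+O(\tau^N)$ can only contribute to the $\tau^0$ coefficient through their differential parts, and $\cR_{N-1-2m}(\tau^{N-m})=4(N-m)m\,\tau^{N-m-1}+O(\tau^{N-m})$ gives $c_N$ directly. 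Second, the appeal to dimensional universality for the critical case $N=n/2$ is unnecessary: the recursion only requires dividing by $4(j+1)(N-1-j)$ for $0\le j\le N-2$, which never vanishes, so the argument applies verbatim at $N=n/2$ as well.
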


\begin{theorem}\label{GJMSrecursive}{\bf Recursive formula for GJMS
    operators.} 
For $N\geq 1$ (and $N\leq n/2$ if $n$ is even),
\begin{equation}\label{oprecform}
P_{2N}=-\sum_{\stackrel{\scriptstyle{|I|=N}}{I\neq (N)}}m_IP_{2I}
+\cM_{2N}.  
\end{equation}
\end{theorem}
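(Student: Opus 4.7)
Assuming Theorem~\ref{GJMSexplicit}, I would deduce Theorem~\ref{GJMSrecursive} by substituting the explicit formula into the right-hand side of \eqref{oprecform} and reducing to a purely combinatorial identity satisfied by the constants in \eqref{nm}. Applying Theorem~\ref{GJMSexplicit} to each factor of $P_{2I}=P_{2I_1}\circ\cdots\circ P_{2I_r}$ and using associativity of operator composition, one finds
$$
P_{2I}=\sum_{K\leq I}\Big(\prod_{j=1}^{r}n_{K^j_I}\Big)\cM_{2K},
$$
where $K\leq I$ means that $I$ is obtained from $K$ by grouping consecutive parts, and $K^j_I$ denotes the sub-composition of $K$ forming the $j$-th block of $I$. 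Plugging this into \eqref{oprecform}, collecting coefficients of each $\cM_{2K}$, and comparing with the expansion of $P_{2N}$ given by Theorem~\ref{GJMSexplicit}, we see that \eqref{oprecform} is equivalent to the identity
\begin{equation}\label{keyid}
\sum_{I:\,K\leq I} m_I\prod_{j=1}^{r(I)}n_{K^j_I}=\delta_{K,(N)}
\end{equation}
for every composition $K$ of $N$.

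The case $K=(N)$ of \eqref{keyid} is immediate from $m_{(N)}=n_{(N)}=1$; the substantive content is the vanishing of the sum whenever $K=(K_1,\ldots,K_s)$ has length $s\geq 2$. I would prove this by induction on $s$. The base case $s=2$, $K=(a,b)$, collapses to $m_{(a,b)}+n_{(a,b)}=0$, which is a short computation from \eqref{nm}. For the inductive step, one parametrizes each coarsening $I\geq K$ by the index $j_1\in\{1,\ldots,s\}$ at which the first cut of $I$ occurs: then $I_1=K_1+\cdots+K_{j_1}$ is determined, and the tail $(I_2,\ldots,I_{r(I)})$ ranges over all coarsenings of the truncated composition $(K_{j_1+1},\ldots,K_s)$. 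After splitting off the data attached to the first block, the plan is to rewrite the remaining sum in a form to which the inductive hypothesis applies.

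The main obstacle is this splitting step: the coupling factor $1/(I_j+I_{j+1})$ in the definition of $m_I$ links the first block $I_1$ to the second $I_2$, so the first-block data does not cleanly factor out of $m_I$. The crux of the argument should therefore be an algebraic identity—most likely a partial-fraction decomposition in $I_2$ or equivalently in the partial sums of $K$—that disentangles this coupling and produces the telescoping needed. Conceptually, \eqref{keyid} asserts that the two upper-triangular refinement matrices with entries $\prod n_{K^j_I}$ and $\prod m_{K^j_I}$ (indexed by $K\leq I$) are inverse to each other; should the direct induction prove intractable, an alternative is to recognize $n_I$ and $m_I$ as Taylor coefficients of explicit generating functions and derive \eqref{keyid} from a single generating-function manipulation.
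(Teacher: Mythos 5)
Your plan attacks the ``other'' direction of inversion from the paper. You substitute the explicit formula for $P_{2I}$ into the right-hand side of \eqref{oprecform} and reduce to
$\sum_{I\geq K} m_I \prod_j n_{K^j_I}=\delta_{K,(N)}$ (i.e.\ $MN=\mathrm{Id}$), whereas the paper instead introduces $\overline{\cM}_{2N}=\sum_{|I|=N} m_I P_{2I}$, shows $P_{2N}=\sum_{|I|=N}n_I\overline{\cM}_{2I}$ (i.e.\ $NM=\mathrm{Id}$, as in \eqref{Mbar}), and then concludes $\overline{\cM}_{2N}=\cM_{2N}$ by the uniqueness of the inductive inversion of the lower-triangular relation \eqref{opexform}. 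Both routes are legitimate; the paper's detour through uniqueness of the inverse is what lets it use $NM=\mathrm{Id}$, which happens to be exactly what Krattenthaler's Lemma~\ref{Klemma} delivers.

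However, the decisive step in your plan is missing. You correctly identify that the induction on $s$ breaks down at the splitting of the first block because of the coupling factor $1/(I_1+I_2)$ in $m_I$, but you do not supply the algebraic identity that disentangles it — you only conjecture that ``a partial-fraction decomposition'' should do the job. That identity is precisely the hard content of the theorem. In the paper this role is played by Krattenthaler's Lemma~\ref{Klemma} (equivalently the $X=Y$ specialization \eqref{Kidenb}), whose proof is itself an induction on $s$ made tractable only by the introduction of the auxiliary variables $X,Y$; the paper explicitly flags that ``the proof is not at all obvious'' and that ``the real ingenuity was to introduce the variables $X$ and $Y$.'' Your base case $m_{(a,b)}+n_{(a,b)}=0$ checks out and the bookkeeping leading to \eqref{keyid} is correct (indeed only the implication $\eqref{keyid}\Rightarrow\eqref{oprecform}$ is needed, so no linear-independence issue arises), but absent the telescoping identity your argument is a reduction, not a proof. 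To complete it you would need to either establish an analogue of \eqref{Kidenb} adapted to the $MN$ direction, or carry out the matrix-inversion step that converts the paper's $NM=\mathrm{Id}$ to your $MN=\mathrm{Id}$ — which is in effect what the paper's uniqueness-of-inverse argument accomplishes.
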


Clearly the explicit formula expresses $P_{2N}$ in terms
of the second order building blocks $\cM_{2M}$, $M\leq N$.  The recursive
formula expresses 
each $P_{2N}$ as a sum of compositions of lower order GJMS operators,
modulo the second order term $\cM_{2N}$.  For $N=1$ both formulae state
that $\cM_2=P_2$, the Yamabe operator.  
For $N=2$ the formulae express the Paneitz operator as
$P_4=\cM_2^2+\cM_4=P_2^2+\cM_4$.  The principal part of $\cM_{2N}$ for
$N>1$ involves curvature, and $\cM_{2N}=0$ for $N>1$ if $g$ is flat.  
Further discussion and specializations of the formulae may be found in
\cite{J3}.   

The $Q$-curvatures are defined in terms of the zeroth order terms of the
GJMS operators: 
\begin{equation}\label{Qdef}
P_{2N}(1)=(-1)^N\Big(\frac{n}{2}-N\Big)Q_{2N}. 
\end{equation}
$Q_{2N}$ is defined for all $N\geq 1$ if $n$ is odd and for $1\leq N\leq
n/2$ if $n$ is even.  For $n$ even, both sides vanish in the critical case
$N=n/2$ and $Q_n$ is defined by an analytic continuation.   

\begin{theorem}\label{Qexplicit}{\bf Explicit formula for Q-curvatures.}  
For $N\geq 1$ (and $N\leq n/2$ if $n$ is even),
\begin{equation}\label{Qexform}
(-1)^NQ_{2N}=\sum_{|(I,a)|=N}n_{(I,a)}a!(a-1)!2^{2a}\cM_{2I}(W_{2a}).
\end{equation}
\end{theorem}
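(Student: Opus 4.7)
My plan is to apply the explicit formula for GJMS operators (Theorem~\ref{GJMSexplicit}) to the constant function $1$ and unfold each $\cM_{2J}(1)$ in terms of the basic expressions $\cM_{2I}(W_{2a})$. Combined with the definition \eqref{Qdef} of $Q_{2N}$ via $P_{2N}(1)$, this yields the formula \eqref{Qexform} after matching coefficients.

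The first step is a closed-form expansion of $\cM(r)(1)$. From $\cM(r)=\delta(h_r^{-1}d)-U(r)$ and the explicit definition of $U(r)$, the term $\delta(h_r^{-1}dW)$ cancels in $\cM(r)W = \delta(h_r^{-1}dW)-U(r)W(r)$, leaving
$$
\cM(r)\bigl(W(r)\bigr) = -\pa_r^2 W(r) + (n-1)r^{-1}\pa_r W(r) = \sum_{a\geq 1} 2a(n-2a)W_{2a}\,r^{2a-2}.
$$
Since $\cM(r)(W(r)-1)=\sum_a r^{2a}\cM(r)(W_{2a})$, subtracting and matching the coefficient of $r^{2N'-2}$ against \eqref{Mdef} gives
$$
\cM_{2N'}(1) = 2N'(n-2N')(N'-1)!^2\,4^{N'-1}\,W_{2N'} - \sum_{b=1}^{N'-1} \frac{(N'-1)!^2\,4^{N'-b}}{(b-1)!^2}\cM_{2b}\bigl(W_{2(N'-b)}\bigr).
$$

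Next, apply Theorem~\ref{GJMSexplicit} to $1$: $P_{2N}(1)=\sum_{|J|=N} n_J\,\cM_{2J}(1)$. Writing $J=(I,J_r)$ and substituting the expansion above, one sees that each term $\cM_{2K}(W_{2c})$ (with $|K|+c=N$, $c\geq 1$) receives exactly two contributions: one from $J=(K,c)$, contributing $n_{(K,c)}\cdot 2c(n-2c)(c-1)!^2\,4^{c-1}$; and, when $K=(K_1,\ldots,K_s)$ is non-empty, one from $J'=(K_1,\ldots,K_{s-1},K_s+c)$, contributing $-n_{J'}\cdot (K_s+c-1)!^2\,4^c/(K_s-1)!^2$. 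By \eqref{Qdef}, the identity \eqref{Qexform} asserts that the sum of these two contributions equals $\tfrac{n-2N}{2}\,n_{(K,c)}\,c!(c-1)!\,2^{2c}$. The parts linear in $n$ match automatically, and the $n$-independent parts reduce to the combinatorial identity
$$
n_{J'} = n_{(K,c)}\cdot \frac{(K_s-1)!^2(c-1)!^2\, |K|\, c}{(K_s+c-1)!^2},
$$
which follows at once from \eqref{nm}: the ratio $n_{(K,c)}/n_{J'}$ telescopes, since the partial sums $S_j=\sum_{k=1}^{j}K_k$ for $j<s$ appear identically in both, and the single remaining factor uses $S_s(N-S_s)=|K|\cdot c$.

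The main obstacle is really just recognizing the two-contribution structure in the second step and setting up the bookkeeping correctly; once the right grouping is found, the final combinatorial identity on the $n_I$'s is a one-line calculation. The critical case $n=2N$ is handled by treating the whole identity as a polynomial relation in $n$: \eqref{Qexform} holds for generic $n$ with $n/2\neq N$ by the above comparison, and it then extends to the critical dimension by the same analytic continuation that defines $Q_n$.
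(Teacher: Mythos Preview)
Your proof is correct and takes a genuinely different route from the paper's primary argument. The paper computes $P_{2N}(1)$ by going back to the ambient construction: it chooses the extension $\ft = v^{1/2}$ of $1$ in \eqref{Pdefnew}, so that the factor $(n/2-N)$ is peeled off at the very first application of $\Dt$, yielding $(-1)^N Q_{2N} = -2\cR_{1-N}\cdots\cR_{N-3}(w')|_{\rho=0}$; this is then evaluated by a combinatorial identity (Proposition~\ref{modrecurcons}) shown to be equivalent to the one underlying Theorem~\ref{GJMSexplicit}. You instead take Theorem~\ref{GJMSexplicit} as a black box, expand $\cM_{2N'}(1)$ directly from the generating-function definition of $\cM(r)$, recognize the two-contribution structure, and reduce everything to a one-line ratio computation for $n_{(K,c)}/n_{J'}$. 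The paper in fact acknowledges this alternative at the end of \S\ref{intro}, noting its kinship with arguments in \cite{J3} where the invariants $\cM_{2N}(1)$ play a prominent role. Your method is shorter once Theorem~\ref{GJMSexplicit} is available and makes the dependence on $n$ transparent, so the critical case $N=n/2$ follows by continuation; the paper's method has the advantage of producing $Q_{2N}$ without ever dividing by $n/2-N$, and of handling the critical case either by that same continuation or independently via the Fefferman--Hirachi realization $(-1)^{n/2}Q_n=-\Dt^{n/2}(\log t)|_{\rho=0,t=1}$.
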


\begin{theorem}\label{Qrecursive}{\bf Recursive formula for Q-curvatures.}
For $N\geq 1$ (and $N\leq n/2$ if $n$ is even),
\begin{equation}\label{Qrecform}
(-1)^NQ_{2N}=-\sum_{\stackrel{\scriptstyle{|(I,a)|=N}}{a< N}}
m_{(I,a)}(-1)^aP_{2I}(Q_{2a})+N!(N-1)!2^{2N}W_{2N}.
\end{equation}
\end{theorem}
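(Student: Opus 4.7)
The plan is to derive Theorem \ref{Qrecursive} from the already-established Theorems \ref{GJMSexplicit} and \ref{Qexplicit} by expanding both sides of the claimed identity in terms of the elementary building blocks $\cM_{2M}(W_{2b})$ and matching coefficients. I begin by isolating the $W_{2N}$ term: in Theorem \ref{Qexplicit} the composition $(I,a)=(\emptyset,N)$ (for which $n_{(N)}=1$) contributes exactly $N!(N-1)!\,2^{2N}W_{2N}$, matching the last term on the right-hand side of Theorem \ref{Qrecursive}. Hence Theorem \ref{Qrecursive} is equivalent to
\begin{equation*}
\sum_{\substack{|(I,a)|=N\\ a<N}} n_{(I,a)}\,a!(a-1)!\,2^{2a}\,\cM_{2I}(W_{2a})
\;=\; -\sum_{\substack{|(J,a)|=N\\ a<N}} m_{(J,a)}(-1)^a\, P_{2J}(Q_{2a}).
\end{equation*}

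I then expand the right-hand side by applying Theorem \ref{Qexplicit} to each $(-1)^a Q_{2a}$ and Theorem \ref{GJMSexplicit} factor-by-factor to each $P_{2J}=P_{2J_1}\circ\cdots\circ P_{2J_s}$. Since $\cM_{2L_1}\circ\cdots\circ\cM_{2L_s}\circ\cM_{2K}=\cM_{2L_1 L_2\cdots L_s K}$ (with the index denoting the concatenated composition), matching the coefficient of $\cM_{2M}(W_{2b})$ for each nonempty composition $M$ with $|M|+b=N$ (after cancelling the common factor $b!(b-1)!\,2^{2b}$) reduces the identity to
\begin{equation*}
n_{(M,b)} = -\sum_{\substack{M=L_1 L_2\cdots L_s K\\ s\geq 1,\,L_j\neq\emptyset}} m_{(|L_1|,\ldots,|L_s|,\,|K|+b)}\,n_{L_1}\cdots n_{L_s}\,n_{(K,b)},
\end{equation*}
where the outer sum ranges over all decompositions of $M$ as a concatenation of $s\geq 1$ nonempty compositions $L_j$ followed by a (possibly empty) composition $K$.

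This last identity is the specialization to $C=(M,b)$ of the general combinatorial identity $n_C=-\sum_{C=K_1\cdots K_r,\,r\geq 2,\,K_j\neq\emptyset} m_{(|K_1|,\ldots,|K_r|)}\,n_{K_1}\cdots n_{K_r}$, valid for any composition $C$ with at least two parts. Indeed, splits of $C=(M,b)$ with $r\geq 2$ are parameterized by separating off the final piece $K_r=(K,b)$, which necessarily ends with the entry $b$, and writing $K_1\cdots K_{r-1}=L_1\cdots L_s$ with $s=r-1\geq 1$. The general identity in turn is exactly what arises from comparing Theorems \ref{GJMSexplicit} and \ref{GJMSrecursive} (invoking linear independence of the $\cM_{2L}$ at a generic metric), so it is already available from the paper's treatment of those theorems.

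\emph{Main obstacle.} The principal technical challenge is the combinatorial bookkeeping in the coefficient-matching step, verifying that the conditions on decompositions in Theorem \ref{Qrecursive} (where the last entry $a$ is singled out by the $(I,a)$ convention) translate correctly through the double substitution via Theorems \ref{Qexplicit} and \ref{GJMSexplicit} into the specialization of the $P$-case combinatorial identity. Once the index conventions are aligned, the reduction to the previously established identity completes the proof.
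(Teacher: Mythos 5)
Your reduction is correct up to the final combinatorial identity, and the route you take is genuinely different from the paper's. The paper rewrites the claimed identity as $\Wb_{2N}=\sum_{|(L,d)|=N}m_{(L,d)}(-1)^dP_{2L}(Q_{2d})$, substitutes \eqref{Qexform} for $(-1)^dQ_{2d}$, and then converts the resulting $\cM_{2I}$'s into $P$-monomials via the rearranged form $\cM_{2M}=\sum_{|J|=M}m_JP_{2J}$ of \eqref{oprecform}. Collecting the coefficient of each $P_{2K_1}\cdots P_{2K_s}$ leads first to Krattenthaler's lemma in the extended form \eqref{Kidenb} (with the extra parameter $b$), and then to a separate telescoping identity that must be proved by its own induction. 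You go in the opposite direction: after substituting \eqref{Qexform} and then \eqref{opexform}, everything is expressed in $\cM$-monomials applied to $W_{2b}$'s, and matching coefficients reduces the theorem to the single combinatorial identity
\[
n_C \;=\; -\sum_{\substack{C=K_1\cdots K_r\\ r\geq 2,\; K_j\neq\emptyset}} m_{(|K_1|,\dots,|K_r|)}\,n_{K_1}\cdots n_{K_r}
\]
for every composition $C$ with at least two parts (your ``specialization'' to $C=(M,b)$ with $M\neq\emptyset$ is in fact the full statement, since one may always single out the last entry). This avoids the telescoping step entirely and is an attractive simplification.

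The gap is the justification of this last identity. Appealing to ``linear independence of the $\cM_{2L}$ at a generic metric'' in order to extract it from Theorems~\ref{GJMSexplicit} and \ref{GJMSrecursive} as operator identities is not something the paper establishes, and the paper's proof of Theorem~\ref{GJMSrecursive} is structured precisely to avoid such an appeal (it uses a formal triangular inversion, not an independence claim about the operators). What the paper does prove, via Lemma~\ref{Klemma} with $X=Y=0$, is the dual identity $\sum_{A\subset[s-1]}n_I\,m_{J_1}\cdots m_{J_r}=0$ for any composition $K$ of length $s>1$, i.e.\ the vanishing of \eqref{Kcoeff2}. Work in the free associative algebra on generators $y_1,y_2,\dots$, graded by $\deg y_N=N$, and for constants $c_I$ with $c_{(N)}=1$ let $\psi_c$ denote the graded algebra endomorphism determined by $\psi_c(y_N)=\sum_{|I|=N}c_I\,y_{I_1}\cdots y_{I_r}$. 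Then the identity the paper proves from Krattenthaler is precisely $\psi_m\circ\psi_n=\mathrm{id}$, whereas the identity you need is $\psi_n\circ\psi_m=\mathrm{id}$. These are equivalent: on each finite-dimensional homogeneous component, the matrix of $\psi_n$ in the monomial basis is triangular with $1$'s on the diagonal when monomials are ordered by length (because $n_{(N)}=1$), so $\psi_n$ is invertible, and a one-sided inverse of an invertible linear map is two-sided. Replacing the generic-metric appeal by this short algebraic observation closes the gap and completes your proof.
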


The explicit formula expresses $Q_{2N}$ in terms of the
operators $\cM_{2M}$ and the coefficients $W_{2a}$.  The recursive formula 
expresses $Q_{2N}$ in terms of GJMS operators applied to $Q_{2a}$ with
$a<N$, modulo the multiple of $W_{2N}$.  Observe that
the factor $n/2-N$ in the definition which vanished in the critical case 
no longer appears. 
So the $Q$-curvature formulae do not follow immediately from the GJMS
operator formulae just by taking constant terms.  

If $g$ is Einstein or locally conformally flat, then 
there is an invariantly defined Poincar\'e metric to infinite order
also if $n$ is even.  It can be written explicitly; see \cite{FG}.  In
these cases, $P_{2N}$ and $Q_{2N}$ are invariantly defined for all $N\geq
1$ also for $n$ even.  For such $g$, Juhl's formulae and our 
proofs are valid for all $N$.     

The GJMS operators are known to be self-adjoint.  This is exhibited by the
formulae \eqref{opexform} and \eqref{oprecform}, since 
$\cM(r)$ is evidently self-adjoint with respect to $g$ for each $r$ so that
the $\cM_{2N}$ are all self-adjoint, and since $n_I=n_{I^{-1}}$ and
$m_I=m_{I^{-1}}$ where $I^{-1}=(I_r,\ldots, I_1)$.  However, the 
self-adjointness is not obvious from the original GJMS construction.  If
one desires to understand Juhl's formulae in terms of the original 
construction, it is reasonable to start by asking the modest question 
of how to see the self-adjointness from that derivation.  It turns out that   
understanding this is the key to unlocking the mysteries of Juhl's
formulae. 

The operators $P_{2N}$ were derived in \cite{GJMS} via 
the Laplacian of the ambient metric associated to $g$.  In normal form,
this is the metric  
\begin{equation}\label{ambmetric}
\gt=2\rho dt^2 +2tdtd\rho +t^2g_\rho
\end{equation}
on $\R_+\times M\times (-\ep,\ep)$, where $t\in \R_+$, $\rho \in
(-\ep,\ep)$, and $g_\rho=h_r$ with $\rho=-r^2/2$.  The asymptotic vanishing 
of $\Ric(g_+)+ng_+$ at $r=0$ translates into asymptotic vanishing of
$\Ric(\gt)$ at $\rho=0$.  See \cite{FG} for details.  If $f\in
C^\infty(M)$ and $\ft\in 
C^\infty(M\times (-\ep,\ep))$ satisfies $\ft(x,0)=f$, then the GJMS
definition is   
\begin{equation}\label{Pdef}
P_{2N}f=\Dt^N(t^{N-n/2}\ft)|_{\rho=0,t=1}
\end{equation}
where $\Dt$ denotes the Laplacian in the metric $\gt$.  
The right-hand side is shown to be independent of the choice of $\ft$
extending $f$.  

It is straightforward to calculate the Laplacian $\Dt$ of a metric of the 
form \eqref{ambmetric}.  Evidently there is a term  
involving the Laplacian $\Delta_{g_\rho}$ in the metric $g_\rho$ for fixed
$\rho$ acting in the $M$ factor (see \eqref{Dfirst} below).  This term is
not self-adjoint with    
respect to $g=g_0$, so $P_{2N}$ obtained by iterating $\Dt$ and restricting 
to $\rho=0$ does not appear to be self-adjoint either.  This is the reason
that self-adjointness of the $P_{2N}$ is not apparent from this 
construction.  However, if we set 
\begin{equation}\label{vt}
v(\rho)=\sqrt{\frac{\det g_\rho}{\det g_0}}
\end{equation}
(so that $v(\rho)=V(r)$ with $\rho=-r^2/2$), then multiplying
the volume form for $g$ by $v(\rho)$ gives the volume form for
$g_\rho$.  It follows that for each $\rho$ the operator
$v(\rho)\Delta_{g_{\rho}}$   
is self-adjoint with respect to $g$.  Pre- and post-composing a
self-adjoint operator with multiplication by a smooth real function gives 
another self-adjoint operator.  Therefore the operator 
$v^{1/2}\circ\Delta_{g_{\rho}}\circ v^{-1/2}$ is also self-adjoint with 
respect to $g$. This motivates consideration of    
$$
\Dt_v:=v^{1/2}\circ\Dt\circ v^{-1/2},
$$
as we are guaranteed that the operator acting along $M$ when $\Dt_v$ is 
written out will be self-adjoint with respect to 
$g$.  Moreover, $\Dt_v^N=v^{1/2}\circ\Dt^N\circ v^{-1/2}$ since the middle 
factors of $v^{\pm 1/2}$ cancel.  The pre- and post-multiplications by
$v^{\pm 1/2}$ affect neither the extension property nor 
the restriction back to $\rho=0$ since $v=1$  
at $\rho =0$.  Hence \eqref{Pdef} can be rewritten as  
\begin{equation}\label{Pdefnew}
P_{2N}f=\Dt_v^N(t^{N-n/2}\ft)|_{\rho=0,t=1}. 
\end{equation}
Now a direct calculation which we carry out in \S\ref{Gsect} shows that 
\begin{equation}\label{Dvformula}
\Dt_v(t^\gamma\ft)=t^{\gamma-2}\left[-2\rho\pa_\rho^2+(2\gamma+n-2)\pa_\rho 
+\cMt(\rho)\right]\ft,
\end{equation}
where $\cMt(\rho)=\cM(r)$, $\rho = -r^2/2$.  This is the key identity.  It 
explains the previously mysterious appearance of both $W=\sqrt{V}$ and 
the generating function $\cM(r)$ in Juhl's theory.  Since   
$\left[-2\rho\pa_\rho^2+(2\gamma+n-2)\pa_\rho\right]\rho^k
=c_{k,\gamma,n}\rho^{k-1}$
for constants $c_{k,\gamma,n}$, upon choosing $\ft$ to be independent of
$\rho$ we see that iterating \eqref{Dvformula} and restricting to
$\rho=0$, $t=1$ gives a formula for
$P_{2N}$ as a linear combination of compositions of the Taylor coefficients
of $\cMt(\rho)$, i.e. of the $\cM_{2I}$.  Showing that the coefficients in 
the linear combination are the $n_I$ reduces to a (rather nontrivial) 
combinatorial identity which 
we derive in \S\ref{combid}.  This proves Theorem~\ref{GJMSexplicit}.  
Theorem~\ref{Qexplicit} reduces to an equivalent combinatorial identity
upon calculating $P_{2N}1$ using \eqref{Pdefnew}, \eqref{Dvformula} and
taking the extension $\ft$ to be $v^{1/2}$ rather than $1$.  This     
reduction is included in \S\ref{Gsect} and the proof of the relevant 
combinatorial identity in \S\ref{combid}.    

Theorem~\ref{GJMSrecursive} can be derived from Theorem~\ref{GJMSexplicit} 
by inverting \eqref{opexform}, viewed as a formal transformation law from
the $\cM_{2N}$ to the $P_{2N}$.  A proof in the opposite direction due to 
Krattenthaler 
was presented in \S 2 of \cite{J3} and immediately implies the 
direction we need here.  Likewise, Theorem~\ref{Qrecursive} follows from
Theorem~\ref{Qexplicit} upon inverting \eqref{Qexform}, viewed as a formal 
transformation from the $W_{2N}$ to the $Q_{2N}$.  In \S\ref{recursive} we
review Krattenthaler's proof of the inversion for the operators following
the presentation in \cite{J3} and then
present the similar but more complicated proof for the $Q$-curvatures.  

It is also possible to prove both the explicit and recursive formulae for
$Q$-curvatures by taking 
the constant term in the corresponding formula for the GJMS operators and  
then rewriting by 
deriving and substituting expressions for the $\cM_{2N}(1)$.  This approach 
is closely related to arguments in \cite{J3}, where the scalar invariants 
$\cM_{2N}(1)$ play a prominent role.  

We still find these formulae to be quite astonishing. Juhl deserves great
credit for their discovery as subtle consequences of the recursive
structure of his residue families.  Even though
we now see that this theory of residue families and their factorization
identities is not required for their proofs, this theory, linking ideas
from conformal geometry, representation theory and spectral theory, appears  
deep and fascinating and deserves further exploration.

\section{Explicit Formulae}\label{Gsect} 

In this section we give the details of the argument outlined in the
introduction which reduces Theorem~\ref{GJMSexplicit} to a combinatorial 
identity and then show how similar reasoning reduces
Theorem~\ref{Qexplicit} to an equivalent combinatorial identity.  

The first task is to establish \eqref{Dvformula} by direct calculation.  
The inverse of the metric \eqref{ambmetric} is
$$
\gt^{IJ}=
\begin{pmatrix}
0&0&t^{-1}\\
0&t^{-2}g^{ij}_\rho&0\\
t^{-1}&0&-2\rho t^{-2}
\end{pmatrix}
$$
and $\sqrt{|\det \gt|}=t^{n+1}\sqrt{|\det g_\rho|}$.  Using this in   
$$
\Dt = \frac{1}{\sqrt{|\det \gt|}}\pa_I\left(\gt^{IJ}
\sqrt{|\det \gt|}\pa_J\right)
$$
gives
\begin{equation}\label{Dfirst}
\Dt(t^\ga\varphi)=t^{\ga-2}\left[-2\rho \varphi'' +(2\ga+n-2-2\rho 
v'/v )\,\varphi'
+(\Delta_{g_\rho}+\ga v'/v)\varphi \right]
\end{equation}
(cf. (3.5) of \cite{GJMS}).    
Here $v$ is given by \eqref{vt}, $'$ denotes $\pa_\rho$, and $\varphi$ is 
independent of $t$.  Set $w=v^{1/2}$ and 
$\varphi=w^{-1}\psi$.  Then $v'/v=2w'/w$ and  
\[
\begin{split}
\varphi' &= w^{-1}\psi' -w^{-2}w'\psi\\
\varphi'' &= w^{-1}\psi'' -2w^{-2}w'\psi'+(2w^{-3}w'^2-w^{-2}w'')\psi.
\end{split}
\]
Substituting and simplifying gives
\begin{equation}\label{firstpart}
\begin{split}
w\big[-2\rho \varphi'' +&(2\ga+n-2-2\rho 
v'/v )\varphi'+
\ga (v'/v)\varphi\big]\\
=&-2\rho\psi''+(2\ga +n-2)\psi'
+w^{-1}\big[2\rho w''-(n-2)w'\big]\psi. 
\end{split}
\end{equation}
For the remaining term in \eqref{Dfirst} we have
\begin{lemma}\label{lapla}
\begin{equation}\label{secondpart}
w\circ \Delta_{g_\rho}\circ w^{-1}
=\delta(g_\rho^{-1}d) -w^{-1}\delta(g_\rho^{-1}dw). 
\end{equation}
(Recall that $\delta$ denotes the divergence operator with respect to
$g=g_0$.)  The second term on the right-hand side acts as a zeroth order
operator.   
\end{lemma}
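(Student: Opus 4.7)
The plan is to prove the identity by a short direct calculation: first rewrite $\Delta_{g_\rho}$ in a form that makes the $g$-divergence $\delta$ visible, and then perform the conjugation by $w$ and watch the first-order cross terms cancel.

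The starting point is that $\sqrt{|\det g_\rho|}=v\sqrt{|\det g_0|}$ by the definition of $v$. Substituting this into the coordinate expression $\Delta_{g_\rho}f=|\det g_\rho|^{-1/2}\pa_i(|\det g_\rho|^{1/2}g_\rho^{ij}\pa_jf)$ and pulling the $v$'s through the outer $|\det g_0|^{-1/2}$ gives the compact identity
\begin{equation*}
\Delta_{g_\rho}=v^{-1}\,\delta\bigl(v\,g_\rho^{-1}d\,\cdot\,\bigr).
\end{equation*}
Since $v=w^{2}$, conjugation by $w$ yields
\begin{equation*}
w\circ\Delta_{g_\rho}\circ w^{-1}(\psi)
=w^{-1}\,\delta\!\left(w^{2}g_\rho^{-1}d(w^{-1}\psi)\right).
\end{equation*}

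Next I would expand the argument of $\delta$ using the Leibniz rule on $d$, obtaining
$w^{2}g_\rho^{-1}d(w^{-1}\psi)=w\,g_\rho^{-1}d\psi-\psi\,g_\rho^{-1}dw$,
and then apply the elementary identity $\delta(fX)=f\,\delta X+X(f)$ to each of the two resulting terms. The cross terms produced are $\langle d\psi,dw\rangle_{g_\rho}$ and $\langle d\psi,dw\rangle_{g_\rho}$, with opposite signs, so they cancel. What remains is
\begin{equation*}
\delta\!\left(w^{2}g_\rho^{-1}d(w^{-1}\psi)\right)
=w\,\delta(g_\rho^{-1}d\psi)-\psi\,\delta(g_\rho^{-1}dw),
\end{equation*}
and dividing by $w$ gives exactly \eqref{secondpart}. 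The assertion that the second term is zeroth order is then immediate: once $\rho$ is fixed, $w^{-1}\delta(g_\rho^{-1}dw)$ is a scalar function on $M$, so it acts on $\psi$ by pointwise multiplication.

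There is really no hard step here — the argument is bookkeeping. The only thing to be careful about is to remember that $\delta$ denotes the $g_0$-divergence while the inverse metric $g_\rho^{-1}$ inside it refers to $g_\rho$, which is precisely why the volume factor $v$ appears when passing from $\Delta_{g_\rho}$ to $\delta$; once that is kept straight, the cancellation of the cross terms is automatic.
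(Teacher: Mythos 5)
Your proof is correct, but it takes a genuinely different route from the paper's. The paper's proof is conceptual and shorter: it observes that both $w\circ\Delta_{g_\rho}\circ w^{-1}$ and $\delta(g_\rho^{-1}d)$ are second-order operators with the same principal symbol, and both are self-adjoint with respect to $g_0$ (the first by the discussion in the introduction, the second visibly). Their difference is therefore a self-adjoint operator of order at most one, hence of order zero, and it is then identified by evaluating on $w$, where the first operator gives $w\,\Delta_{g_\rho}(1)=0$. Your approach instead substitutes $\sqrt{|\det g_\rho|}=v\sqrt{|\det g_0|}$ into the coordinate formula for $\Delta_{g_\rho}$ to obtain $\Delta_{g_\rho}=v^{-1}\delta(v\,g_\rho^{-1}d\,\cdot\,)$, conjugates by $w$ with $v=w^2$, and cancels the first-order cross terms by hand via the product rule for $\delta$. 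The cancellation you note, $\langle d\psi,dw\rangle_{g_\rho}$ against itself, is exactly the manifestation of the self-adjointness the paper invokes abstractly. The trade-off: your calculation is completely self-contained and explicit, requiring only the coordinate expression for the Laplacian and the Leibniz rule, whereas the paper's argument is quicker and explains \emph{why} the first-order term must vanish (self-adjointness), which is the organizing principle of the whole section. Either proof is adequate; yours re-derives rather than invokes the key structural fact.
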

\begin{proof}
For fixed $\rho$ it is clear that 
$w\circ \Delta_{g_\rho}\circ w^{-1}$ and $\delta(g_\rho^{-1}d)$ are 
second order differential operators whose principal parts agree.   
We observed in the introduction that the first is self-adjoint 
with respect to $g$, and clearly this is the case for the second.  So their
difference is zeroth order.  Evaluating on $w$ identifies the zeroth order
term. 
\end{proof}

Multiplying \eqref{Dfirst} by $w$ and substituting \eqref{firstpart} and
\eqref{secondpart} yields 
\begin{equation}\label{almost}
w\Dt(t^\ga w^{-1}\psi)
=t^{\ga-2}\left[-2\rho\psi''+(2\ga+n-2)\psi'
+\left(\delta(g_\rho^{-1} d)-\Ut(\rho)\right)\psi \right]
\end{equation}
where 
$$
\Ut(\rho)=\frac{\left[-2\rho\pa_\rho^2+(n-2)\pa_\rho 
+\delta(g_\rho^{-1}d)\right]w(\rho)}{w(\rho)}.
$$
The chain rule with $\rho=-r^2/2$ shows that $\Ut(\rho)=U(r)$ so that 
$\delta(g_\rho^{-1} d)-\Ut(\rho)=\cMt(\rho)$.  Hence \eqref{almost} becomes  
\eqref{Dvformula}.  This completes the derivation of \eqref{Dvformula}.  

Set 
$$
\cR_k=-2\rho\pa_\rho^2+2k\pa_\rho +\cMt(\rho)
$$
and note that \eqref{Mdef} becomes
$$
\cMt(\rho)=\sum_{N\geq 1}\cM_{2N}\frac{1}{(N-1)!^2}  
\left(-\frac{\rho}{2}\right)^{N-1}.  
$$
Iterating \eqref{Dvformula} gives
$$
\Dt_v^N(t^{N-n/2}\ft)=
t^{-N-n/2}\cR_{1-N}\cR_{3-N}\cdots \cR_{N-3}\cR_{N-1}\ft, 
$$
so we deduce that $\cR_{1-N}\cR_{3-N}\cdots 
\cR_{N-3}\cR_{N-1}\ft|_{\rho =0}$ 
depends only on $\ft|_{\rho =0}$.  Taking $\ft$ to be independent of 
$\rho$, it follows upon expanding the right-hand side that 
$\cR_{1-N}\cR_{3-N}\cdots \cR_{N-3}\cR_{N-1}|_{\rho =0}$ is a linear
combination of the compositions $\cM_{2I}$.    
In the next section we will prove the combinatorial identity  
\begin{equation}\label{beforechange}
\cR_{1-N}\cR_{3-N}\cdots \cR_{N-3}\cR_{N-1}|_{\rho =0} =
\sum_{|I|=N}n_I\cM_{2I}  
\end{equation}
which identifies the constants in the linear combination. 
Theorem~\ref{GJMSexplicit} then follows via \eqref{Pdefnew}.   

We next show that Theorem~\ref{Qexplicit}, the explicit formula for
$Q$-curvatures, reduces to a similar combinatorial identity which we will
see in the next section is equivalent to \eqref{beforechange}.  
By definition we have $(-1)^N(n/2-N)Q_{2N}=P_{2N}1$.  Use \eqref{Pdefnew}
to calculate $P_{2N}1$, taking $\ft=v^{1/2}$ to be 
the extension of $f=1$.  Thus
$$
(-1)^N(n/2-N)Q_{2N}=\Dt_v^{N-1}\big(v^{1/2}\Dt(t^{N-n/2})\big)|_{\rho=0,t=1}.
$$
Equation \eqref{Dfirst} gives
$$
\Dt(t^{N-n/2})=t^{N-n/2-2}(N-n/2)v'/v=2t^{N-n/2-2}(N-n/2)w'/w.
$$
The factors of $(N-n/2)$ cancel, and it follows that
\begin{equation}\label{N-1}
(-1)^NQ_{2N}=-2\Dt_v^{N-1}(t^{N-n/2-2}w')|_{\rho=0,t=1}.
\end{equation}
Iterating \eqref{Dvformula} gives 
\begin{equation}\label{QR}
(-1)^NQ_{2N}=-2\cR_{1-N}\cR_{3-N}\cdots \cR_{N-3}(w')|_{\rho=0}.
\end{equation}
Now $w=1+\sum_{a\geq 1}W_{2a}(-2\rho)^a$, so
\begin{equation}\label{w'}
w'=\sum_{a\geq 1}a(-2)^aW_{2a}\rho^{a-1}.
\end{equation}
As will be shown in the next section, the following is equivalent to 
\eqref{beforechange}.
\begin{proposition}\label{modrecurcons}
Let $1\leq a\leq N$ and let $f$ be a function on $M$ (i.e. independent of 
$\rho$).  Then
\begin{equation}\label{Rform}
\cR_{1-N}\cR_{3-N}\cdots \cR_{N-3}(f\rho^{a-1})|_{\rho=0}
=\sum_{|I|=N-a}n_{(I,a)}(a-1)!^2(-2)^{a-1}\cM_{2I}(f).
\end{equation}
\end{proposition}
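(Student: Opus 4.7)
The plan is to reduce Proposition~\ref{modrecurcons} to an explicit numerical identity and then to prove that identity by induction. I expect this to simultaneously yield \eqref{beforechange}, explaining the claimed equivalence. First, I would expand $T_N := \cR_{1-N}\cR_{3-N}\cdots\cR_{N-3}$ by writing each factor as $\cR_k = \cR_k^0 + \cMt(\rho)$ with $\cR_k^0 := -2\rho\pa_\rho^2 + 2k\pa_\rho$ and distributing over the product. Each term selects, at each of the $N-1$ positions $p$ (where $k_p = 2p-N-1$), either the Laplacian $\cR_{k_p}^0$ or an $\cMt$-contribution of some order $M\ge 1$, carrying the factor $\cM_{2M}(-\rho/2)^{M-1}/((M-1)!)^2$. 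Since $\cR_k^0(\rho^m) = 2m(k-m+1)\rho^{m-1}$, the Laplacian lowers the $\rho$-power by $1$ and each order-$M$ $\cMt$ raises it by $M-1$; tracking these from the initial power $a-1$, only terms whose $\cMt$-orders form a composition $I$ of $N-a$ survive the restriction $\rho=0$, and the operator applied to $f$ is $\cM_{2I}$. Bookkeeping shows that if the $\cMt$-positions are $p_1<\cdots<p_r$ with orders $I_1,\ldots,I_r$ and $S_{j'} := \sum_{k\le j'}I_k$, the $\rho$-power entering a Laplacian position $p\in(p_{j'},p_{j'+1})$ equals $p - S_{j'}$, so that position contributes the scalar $2(p-S_{j'})(p+S_{j'}-N)$. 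Collecting the factors and substituting the explicit formula \eqref{nm} for $n_{(I,a)}$, Proposition~\ref{modrecurcons} reduces to the identity
\begin{equation*}
\Sigma_I \;:=\; \sum_{0<p_1<\cdots<p_r<N}\;\prod_{j'=0}^{r}\;\prod_{p_{j'}<p<p_{j'+1}}(p-S_{j'})(p+S_{j'}-N) \;=\; \frac{(-1)^{N-r-1}\bigl((N-1)!\bigr)^2}{\prod_{j=1}^{r}S_j(N-S_j)}, \tag{$\star$}
\end{equation*}
with the conventions $p_0 := 0$ and $p_{r+1} := N$.

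Next, I would prove $(\star)$ by induction on the length $r$. The base case $r=0$ (i.e.\ $a=N$, $I$ empty) is the direct evaluation $\prod_{p=1}^{N-1}p(p-N) = (-1)^{N-1}\bigl((N-1)!\bigr)^2$. For the inductive step, I fix $p_1,\ldots,p_{r-1}$ and perform the innermost sum over $p_r \in \{p_{r-1}+1,\ldots,N-1\}$. The summand depends on $p_r$ only through the product of factors in the two adjacent blocks $B_{r-1}$ and $B_r$, and the two quadratics $(p-S_{r-1})(p+S_{r-1}-N)$ and $(p-S_r)(p+S_r-N)$ share the leading $p^2-Np$ and differ only by the constant $I_r(S_r+S_{r-1}-N)$. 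A partial-fraction or telescoping manipulation exploiting this should evaluate the inner sum in closed form, producing the last denominator factor $S_r(N-S_r) = (N-a)a$ and leaving the instance of $(\star)$ for the shorter composition $(I_1,\ldots,I_{r-1})$ with $N$ replaced by the appropriately reduced parameter.

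The main obstacle is executing this inductive step cleanly. Although the two-block structure of the inner sum suggests partial fractions, one must carefully handle edge phenomena: factors $(p-S_{j'})$ or $(p+S_{j'}-N)$ vanish at integer points in the summation range, killing whole classes of summands; some blocks may be empty (so one has to respect the convention that empty products equal $1$); and the reindexing of the parameters to fit the $(r-1)$-case requires care. The remarkably clean form of $(\star)$ indicates that all such boundary phenomena collapse into a single closed expression, and making this collapse explicit is the combinatorial heart of the argument. Once $(\star)$ is established, Proposition~\ref{modrecurcons} follows immediately; moreover, substituting $\cR_{N-1}(f) = \cMt(\rho)f = \sum_{a\ge 1}\cM_{2a}(f)(-\rho/2)^{a-1}/((a-1)!)^2$ into $T_N \cR_{N-1}(f)|_{\rho=0}$ and summing the Proposition over $a$ recovers \eqref{beforechange}, confirming the equivalence asserted in the text.
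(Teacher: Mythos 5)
Your reduction of the Proposition to the numerical identity $(\star)$ is correct: the bookkeeping for the $\rho$-power entering a Laplacian slot $p$ (namely $p-S_{j'}$), the evaluation $\cR_k^0(\rho^m)=2m(k-m+1)\rho^{m-1}$ giving the factor $2(p-S_{j'})(p+S_{j'}-N)$, and the accounting of powers of $2$ and signs against the definition of $n_{(I,a)}$ all check out, including the base case $r=0$.

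However, the proof of $(\star)$ is where the entire difficulty lies, and you have not proved it. The inductive step is only a plan (``a partial-fraction or telescoping manipulation\ldots should evaluate the inner sum''), and you yourself flag it as the main obstacle. The inner sum over $p_r$ runs over a range in which the summand vanishes identically (because of the factor $(p-S_{r-1})$ at $p=S_{r-1}$ and $(p+S_{r-1}-N)$ at $p=N-S_{r-1}$, whenever these integers lie in the appropriate block), so most of the sum collapses and the surviving contributions do not obviously telescope into the displayed right-hand side with the single new factor $S_r(N-S_r)$. Moreover, after summing over $p_r$ one does not simply land on the $(r-1)$-case of $(\star)$ with the same $N$: the blocks to the left are unchanged, so there is no ``reduced parameter'' replacing $N$. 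Until the inductive step is carried out explicitly, $(\star)$ and hence the Proposition remain unproven.

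For comparison, the paper takes a much shorter route to this Proposition: after the change of variables \eqref{cov}, the statement \eqref{Rform} is literally the coefficient of $x_a$ (in the rightmost slot) in the already-established identity \eqref{comb}, because $\cL_{N-1}1 = X(s) = \sum_{a\ge 1}x_a s^{a-1}$. That is, Proposition~\ref{modrecurcons} is shown to be \emph{equivalent} to \eqref{beforechange}, not proved from scratch; the combinatorial content resides entirely in \eqref{comb}, which the paper proves not by telescoping but by a generating-function argument (the series $F_l$, the operators $\cD_m$, and Jacobi-type polynomial solutions $P_m$, $Q_m$). Your $(\star)$ is the same identity in yet another guise (it is the statement $c_{N,r}=\nb_I$ written out explicitly), so if you do manage to push the telescoping induction through you would obtain an alternative, more elementary proof of \eqref{comb}; but at present the key identity is asserted, not established.
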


\noindent
Substituting \eqref{w'} into \eqref{QR} and applying \eqref{Rform} termwise
gives 
$$
(-1)^NQ_{2N}=\sum_{(I,a)=N}n_{(I,a)}a!(a-1)!2^{2a}\cM_{2I}(W_{2a}),
$$
which is the explicit formula for $Q_{2N}$.    

For $n$ even, the above argument applies also for the critical case $N=n/2$
since $Q_n$ 
is defined by removing the factor of $n/2-N$.  The critical case may also
be deduced without this argument of analytic continuation in the
dimension by using the realization
$$
(-1)^{n/2}Q_n= -\Dt^{n/2}(\log t)|_{\rho=0,t=1}
$$
derived in \cite{FH}.  Namely, write
$$
\Dt^{n/2}(\log t)|_{\rho=0,t=1}=\Dt_v^{n/2-1}\big(w\Dt(\log
t)\big)|_{\rho=0,t=1}. 
$$
Direct calculation gives $w\Dt(\log t)=2t^{-2}w'$.  So we recover
\eqref{N-1} and the argument proceeds as above.

\section{Combinatorial Identities}\label{combid}

In this section we derive the combinatorial identities
\eqref{beforechange} and \eqref{Rform} 
to which Theorems~\ref{GJMSexplicit} and \ref{Qexplicit} were reduced
above.  Begin with \eqref{beforechange}.  
First change variables:  set 
\begin{equation}\label{cov}
s=-\frac{\rho}{2},\qquad x_N=\frac{\cM_{2N}}{(N-1)!^2},\qquad  
X(s)=\cMt(\rho)=\sum_{N=0}^\infty x_{N+1}s^N.
\end{equation}
As far as this identity is concerned, 
$x_1$, $x_2,\ldots$ can simply be regarded as noncommuting variables, all
of which commute with $s$.  In the new variables, the $\cR_k$ become the 
differential operators
$$
\cL_{k}=s\frac{d^2}{ds^2}-k \frac{d}{ds} +X(s), 
$$
where $X(s)$ acts as a zeroth order multiplication operator.  We only have
to verify the constant term in $\pa_\rho$ of \eqref{beforechange}, which
becomes 
\begin{theorem}\label{combthm}
Let $N\geq 1$.  Then
\begin{equation}\label{comb}
\cL_{1-N}\cL_{3-N}\cdots \cL_{N-3}\cL_{N-1}1|_{s=0}  
=\sum_{|I|=N}\nb_I\,\,x_{I_1}x_{I_2}\cdots x_{I_r}, 
\end{equation}
where
$$
\nb_I=\frac{(N-1)!^2}{\prod_{k=1}^{r-1}
\Big(\sum_{j=1}^kI_j\Big)\Big(\sum_{j=k+1}^rI_j\Big)}.
$$
\end{theorem}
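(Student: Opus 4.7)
The plan is to prove the identity by expanding the iterated product $\cL_{1-N}\cdots\cL_{N-1}\cdot 1$ via the decomposition $\cL_k = \cL_k^0 + X(s)$, where $\cL_k^0 := s\pa_s^2 - k\pa_s$ acts by $\cL_k^0(s^p) = p(p-k-1)s^{p-1}$ (lowering the $s$-degree by one), while $X(s) = \sum_{i\ge 1} x_i s^{i-1}$ multiplies by a power series. Indexing the $N$ factors by $m=1,\ldots,N$ (rightmost applied first) with subscripts $\kappa_m := N+1-2m$, each summand of the full expansion corresponds to a set of ``$X$-positions'' $1 = p_1 < \cdots < p_r \le N$ together with a monomial index $i_j$ at each $p_j$ (the constraint $p_1 = 1$ coming from $\cL_k^0\cdot 1 = 0$). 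Since each new multiplication by $X$ sits to the left of the accumulated factors, the resulting noncommutative monomial is $x_{i_r}\cdots x_{i_1}$, matching $x_{I_1}\cdots x_{I_r}$ under $I_j := i_{r+1-j}$; tracking degrees forces $|I| = N$.

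Next I would compute the scalar coefficient $c(I,\vec{p})$ as a product of factors $d_{m-1}(d_{m-1}-\kappa_m-1)$ over the $N-r$ derivative positions, where $d_{m-1}$ is the state's $s$-degree just before step $m$. Grouping by the $r$ ``between-$X$'' intervals and substituting $m = p_j + \tau$, each intermediate-group product telescopes into a ratio of falling factorials in the partial sums $a_j := i_1+\cdots+i_j$, $b_j := N-a_j$, and the positions $p_j, p_{j+1}$, while the last group collapses to $(N-p_r)!(N-1)!/(p_r-1)!$. Writing $a_j = B_{r-j}$, $b_j = A_{r-j}$ in terms of the original partial sums $A_k := I_1+\cdots+I_k$, $B_k := N-A_k$ gives
\begin{equation*}
c(I,\vec{p}) = \frac{(N-p_r)!\,(N-1)!}{(p_r-1)!}\prod_{j=1}^{r-1}(-1)^{p_{j+1}-p_j-1}\frac{(a_j-p_j)!\,(b_j-p_j)!}{(a_j-p_{j+1}+1)!\,(b_j-p_{j+1}+1)!}.
\end{equation*}

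The identity $\sum_{\vec{p}} c(I,\vec{p}) = \bar{n}_I = (N-1)!^2/\prod_{k=1}^{r-1}A_kB_k$ I would then prove by induction on $r$. The base case $r=1$ reduces to $\prod_{w=1}^{N-1} w(N-w) = (N-1)!^2$. For the inductive step, carry out the sum over $p_r$ first: after combining the last-group factor with the single $p_r$-dependent intermediate factor (the $j=r-1$ group) and substituting $q := p_r - p_{r-1}$, the $p_r$-sum takes the shape of a terminating balanced ${}_3F_2(-A,-B,1;\,M-N+1,M+1;\,1)$ (with $M=p_{r-1}$, $A=a_{r-1}-p_{r-1}$, $B=b_{r-1}-p_{r-1}$) at unit argument. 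Pfaff--Saalschütz evaluates it and, after simplification, yields the factor $p_{r-1}(N-p_{r-1})/(a_{r-1}b_{r-1})$; since $a_{r-1}b_{r-1}=B_1A_1$, the remaining sum over $p_1,\ldots,p_{r-1}$ becomes \emph{exactly} the $(r-1)$-part sum $\sum c(I^*,\vec{p}^*)$ for the composition $I^* := (I_1+I_2,\,I_3,\ldots,I_r)$ of $N$ (one verifies $a^*_j = a_j$, $b^*_j = b_j$ for $j \le r-2$). Invoking the induction hypothesis gives $\bar{n}_{I^*}=(N-1)!^2/\prod_{k=2}^{r-1}A_kB_k$, and multiplying by $1/(A_1B_1)$ recovers $\bar{n}_I$.

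The main obstacle is the Saalschütz computation itself: verifying that the substitution $q=p_r-p_{r-1}$ turns the $p_r$-sum into a balanced Saalschützian series, and that its closed form collapses to precisely $p_{r-1}(N-p_{r-1})/(A_1B_1)$ with all the remaining factorial denominators reassembling into the ``last-group''-like factor $(N-p_{r-1})!(N-1)!/(p_{r-1}-1)!$. This delicate bookkeeping — in particular reconciling the ``reversed'' partial sums $a_j,b_j$ with the ``forward'' sums $A_k,B_k$ appearing in $\bar{n}_I$ — is the only nontrivial point, but once clear it produces the telescoping $\prod_k 1/(A_kB_k)$ cleanly via the chosen reduction $I \mapsto I^* = (I_1+I_2,I_3,\ldots,I_r)$.
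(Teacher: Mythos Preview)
Your approach is correct and genuinely different from the paper's. Both arguments begin the same way: fix a monomial $x_{I_1}\cdots x_{I_r}$, expand each $\cL_k=\cL_k^0+X(s)$, and record when the $X$-factor is chosen. From that point you diverge. The paper packages the partial coefficients into a family of generating functions $F_l(y)=\sum_j c_{j,l}y^j/(j!)^2$, shows that the three-term recursion $c_{j+1,l}=-(m_l-j)(N-m_l-j)c_{j,l}+c_{j,l-1}$ is equivalent to a hypergeometric ODE $\cD_{m_l}F_l=F_{l-1}$ with $F_0=1$, $F_l(0)=0$, and then solves this chain of ODEs by expanding each $F_l$ in a basis of explicit Jacobi-type polynomial solutions $P_m,Q_m$; the top coefficient of $F_r$ is read off from a single distinguished term $Q_0$. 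You instead write the coefficient of the monomial as an explicit sum over position vectors $\vec p$ and evaluate it by induction on $r$, the innermost sum over $p_r$ being recognized (after your substitution $q=p_r-p_{r-1}$) as a terminating balanced ${}_3F_2$ summed by Pfaff--Saalsch\"utz; the closed form $p_{r-1}(N-p_{r-1})/(A_1B_1)$ then collapses the $r$-part problem to the $(r-1)$-part problem for $I^*=(I_1+I_2,I_3,\ldots,I_r)$. Your route is more direct and uses only a classical hypergeometric identity, while the paper's ODE/Jacobi-polynomial argument is more structural and makes the connection to special functions explicit. The bookkeeping you flag---matching the reversed partial sums $a_j=B_{r-j}$, $b_j=A_{r-j}$ and checking that boundary terms (e.g.\ $p_{r-1}=N$ in the reduced sum) vanish because some factor $d_{m-1}=0$ kills the underlying product---is indeed the only delicate point, and it goes through as you indicate.
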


Set $\cLb_j=\cL_{N+1-2j}$ so that 
$\cL_{1-N}\cL_{3-N}\cdots \cL_{N-3}\cL_{N-1}
=\cLb_N\cLb_{N-1}\cdots \cLb_2\cLb_1$.  
Since $\nb_I=\nb_{I^{-1}}$, \eqref{comb} can be rewritten as
\begin{equation}\label{reduced}
\cLb_N\cLb_{N-1}\cdots \cLb_2\cLb_11|_{s=0}  
=\sum_{|I|=N}\nb_I\,\,x_{I_r}x_{I_{r-1}}\cdots x_{I_1}. 
\end{equation}
Fix positive integers $I_1,\ldots, I_r$, where $r\geq 1$.
We will prove \eqref{reduced} by verifying the 
coefficient of $x_{I_r}x_{I_{r-1}}\cdots x_{I_1}$ in 
$\cLb_N\cLb_{N-1}\cdots \cLb_2\cLb_11|_{s=0}$
for each choice of $I_1,\ldots,I_r$.  

For $1\leq l\leq r$, set 
$$
m_l=I_1+I_2+\cdots + I_l
$$
so that $1\leq m_1<m_2<\cdots <m_{r-1}<m_r$.  (The $m_I$ from the previous
sections will not appear in this section.)     
Consider the calculation of $\cLb_N\cLb_{N-1}\cdots \cLb_2\cLb_11$ by
successive multiplication from the left.  For $1\leq j\leq N$,  
$\cLb_j\cLb_{j-1}\cdots \cLb_1 1$ is a formal power series in $s$ whose
coefficients are polynomials in the $x$'s.   
The only monomials in the $x$'s appearing in   
$\cLb_j\cLb_{j-1}\cdots \cLb_1 1$ which can ultimately contribute to the
coefficient of $x_{I_r}x_{I_{r-1}}\cdots x_{I_1}$ in
$\cLb_N\cLb_{N-1}\cdots \cLb_2\cLb_11$ are 
of the form $x_{I_l}x_{I_{l-1}}\cdots x_{I_1}$ for some $l$, $1\leq l\leq
r$.  The term $sd^2/ds^2- (N+1-2k)d/ds$ in one of the factors $\cLb_k$
just reduces the power of $s$ by 1 and multiplies by a constant.  The term
$X(s)$ is linear in the $x$'s.  So in order for a monomial
$x_{I_l}x_{I_{l-1}}\cdots x_{I_1}$ to 
appear in the expansion of $\cLb_j\cLb_{j-1}\cdots \cLb_1 1$, it must be
that the zeroth order term $X(s)$ has contributed in exactly $l$ of these
$\cLb_k$.  Thus the differentiation in $s$ terms have contributed in
exactly $j-l$ of the $\cLb_k$.  It follows that the power of $s$
multiplying $x_{I_l}x_{I_{l-1}}\cdots x_{I_1}$ is 
$s^{m_l-l-(j-l)}=s^{m_l-j}$.  Hence we have 
\begin{equation}\label{cs}
\cLb_j\cLb_{j-1}\cdots \cLb_1 1 
=\sum_{l=1}^{\text{min}(j,r)}c_{j,l}\;x_{I_l}x_{I_{l-1}}\cdots
x_{I_1}s^{m_l-j} 
+\ldots
\end{equation}
for some constants $c_{j,l}$, where $\ldots$ indicates terms involving
monomials in the $x$'s which cannot contribute in the end.  The $c_{j,l}$
are defined for $1\leq j\leq N$, $1\leq l\leq \text{min}(j,r)$, and 
we have $c_{1,1}=1$ and $c_{j,l}=0$ if $m_l<j\leq N$.    

{From} \eqref{cs} it follows first that the coefficient of 
$x_{I_r}\cdots x_{I_1}$ in $\cLb_N\cLb_{N-1}\cdots
\cLb_2\cLb_11|_{s=0}$ is zero unless $|I|=N$.  In fact, taking $j=N$, the
term $x_{I_r}\cdots x_{I_1}$ on the right-hand side is multiplied by
$s^{m_r-N}$.  This vanishes at $s=0$ unless $m_r=N$, i.e. $|I|=N$.  
Theorem~\ref{combthm} therefore reduces to the statement that
$c_{N,r}=\nb_I$ if $|I|=N$.  We assume henceforth that $|I|=N$,
i.e. $m_r=N$.   

Extend the definition of the $c_{j,l}$ to $0\leq j\leq N$, 
$0\leq l\leq r$ by defining $c_{0,0}=1$ and $c_{j,l}=0$ if 
$0\leq j<l\leq r$ or if $l=0$ and $1\leq j\leq N$.  
We claim that these constants satisfy the recursion relation:
\begin{equation}\label{recursion}
c_{j+1,l} = - (m_l-j)(N-m_l-j)c_{j,l} + c_{j,l-1} 
\end{equation}
for $0\leq j\leq N-1$, $1\leq l\leq r$.  For 
$1\leq j\leq N-1$ and $1\leq l\leq \text{min}(j+1,r)$ this follows by
applying $\cLb_{j+1}$ to \eqref{cs}.  For $j=0$, $l=1$ both sides are 1,
and for all the other values both sides vanish.  Now extend the
definition of the $c_{j,l}$ to $j>N$, $0\leq l\leq r$ by setting
$c_{j,0}=0$ for $j>N$ and by requiring that
\eqref{recursion} hold for $j\geq N$, $1\leq l\leq r$.  The resulting
$c_{j,l}$ are defined for $j\geq 0$, $0\leq l\leq r$, and \eqref{recursion}
holds for $j\geq 0$, $1\leq l\leq r$.   

Define generating functions
$$
F_l(y)=\sum_{j=0}^\infty \frac{c_{j,l}}{(j!)^2}y^j,\qquad\quad 
0\leq l\leq r. 
$$
The definitions of the $c_{j,0}$ and $c_{0,l}$ show that  
\begin{equation}\label{Finit}
F_0=1\qquad\quad \text{and}\qquad\quad F_l(0)=0, \quad 1\leq l\leq r.
\end{equation}
The recursion
\eqref{recursion} turns into a differential equation relating $F_l$ and
$F_{l-1}$.  For fixed positive integral $N$ as above, define ordinary
differential operators  
$$
\cD_m= y(1+y)\frac{d^2}{dy^2}+[1-(N-1)y]
\frac{d}{dy}+m(N-m).
$$
\begin{lemma}\label{diffeq}
Let
\begin{equation}\label{expand}
u=\sum_{j=0}^\infty\frac{u_j}{(j!)^2}y^j, \qquad
f=\sum_{j=0}^\infty\frac{f_j}{(j!)^2}y^j
\end{equation}
be formal power series.  Then $\cD_mu=f$ if and only if  
\begin{equation}\label{recur}
u_{j+1}=-(m-j)(N-m-j)u_j + f_j,\qquad\qquad j\geq 0.
\end{equation}
\end{lemma}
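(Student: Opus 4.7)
The plan is a direct calculation: expand $\cD_m u$ as a formal power series and match coefficients with $f$. The factorial-squared normalization $u=\sum u_j/(j!)^2\, y^j$ is chosen so that the two $(j+1)$ factors produced by applying $y\,d^2/dy^2$ (or $d/dy$) to the $j$th term are exactly absorbed by the passage from $(j!)^2$ to $((j+1)!)^2$; this is the reason the clean first-order recurrence in $j$ arises from what is formally a second-order ODE.

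Concretely, write $a_j=u_j/(j!)^2$ so that $u=\sum_{j\ge 0} a_j y^j$. Differentiating termwise,
\[
u'=\sum_{j\ge 0}(j+1)a_{j+1}y^j,\qquad u''=\sum_{j\ge 0}(j+1)(j+2)a_{j+2}y^j.
\]
I would then compute, for each $j\ge 0$, the coefficient of $y^j$ in each of the three pieces of $\cD_m u$:
\[
[y(1+y)u'']_j=j(j+1)a_{j+1}+j(j-1)a_j,\quad
[(1-(N-1)y)u']_j=(j+1)a_{j+1}-(N-1)j\,a_j,
\]
and $[m(N-m)u]_j=m(N-m)a_j$. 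Adding, the $a_{j+1}$ contributions combine to $(j+1)^2 a_{j+1}$, and the $a_j$ contributions combine to $\bigl(j^2-Nj+m(N-m)\bigr)a_j$.

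The one small algebraic point to verify is the factorization
\[
j^2-Nj+m(N-m)=(j-m)(j-(N-m))=(m-j)(N-m-j),
\]
which is a direct expansion. Combining this with $(j+1)^2 a_{j+1}=u_{j+1}/(j!)^2$, the coefficient of $y^j$ in $\cD_m u$ equals
\[
\frac{u_{j+1}+(m-j)(N-m-j)\,u_j}{(j!)^2}.
\]
Equating this to the coefficient $f_j/(j!)^2$ of $y^j$ in $f$ and rearranging gives the asserted recursion \eqref{recur}, and the equivalence is immediate in both directions since matching all Taylor coefficients is the same as equality of formal power series.

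There is no real obstacle here beyond careful index bookkeeping; the substantive content is the normalization choice $(j!)^2$ and the identity $j^2-Nj+m(N-m)=(m-j)(N-m-j)$, both of which are what make the operator $\cD_m$ encode precisely the recursion \eqref{recursion} satisfied by the $c_{j,l}$.
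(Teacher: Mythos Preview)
Your proof is correct and follows exactly the approach indicated in the paper, which simply states that the proof is to substitute the expansions into the equation and compare coefficients of like powers of $y$. Your write-up supplies the details of this substitution, including the crucial observation that the $(j!)^2$ normalization causes the $a_{j+1}$ contributions to combine into $(j+1)^2 a_{j+1}=u_{j+1}/(j!)^2$, and the factorization $j^2-Nj+m(N-m)=(m-j)(N-m-j)$.
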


\noindent
The proof is to substitute the expansions into the equation and to compare 
coefficients of like powers of $y$.  
Comparing \eqref{recursion} and \eqref{recur} then gives immediately  
\begin{equation}\label{DF}
\cD_{m_l}F_l = F_{l-1}, \qquad 1\leq l\leq r.
\end{equation}

Now $\cD_m$ has a regular singularity at $y=0$ with indicial root 0 of  
multiplicity 2.  By general Frobenius theory or just by staring at
\eqref{recur}, there exists a unique formal 
power series solution of $\cD_mu=0$ with $u(0)=1$.  Also, for any formal
power series $f$ there exists a unique formal power series solution $u$ to 
$\cD_mu=f$ with $u(0)=0$.  In particular, \eqref{Finit} and \eqref{DF}
together characterize the functions $F_l$.  Combining the solutions of the
homogeneous and inhomogeneous problems shows that for any $f$ there is a 
unique solution $u$ to $\cD_mu=f$ with $u(0)$ any prescribed value.

Since the $y^N$ coefficient of $F_r(y)$ is $c_{N,r}/(N!)^2$, the above
considerations show that the statement $c_{N,r}=\nb_I$ to which 
Theorem~\ref{combthm} reduced is a consequence of the following. 
\begin{proposition}\label{recusolve}
Let $r\geq 1$ and $1\leq m_1<m_2<\cdots< m_r=N$.  Define formal power series
$F_l(y)$ for $0\leq l\leq r$ by \eqref{Finit} and \eqref{DF}.
Then $F_r$ is a polynomial of degree $=N$ and its $y^N$ coefficient is  
$$
\left[N^2 \prod_{l=1}^{r-1}m_l(N-m_l)\right]^{-1}.
$$
\end{proposition}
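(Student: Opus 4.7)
The plan is to interpret the $y^N$-coefficient of $F_r$ as a weighted integral, then iterate using integration by parts. A direct calculation shows that with weight $w(y) := (1+y)^{-N-1}$ and $p(y) := y(1+y)^{-N}$, the operator $\cD_m$ takes the formally self-adjoint form
\[
\cD_m u = \frac{1}{w}\frac{d}{dy}\Big[p\,\frac{du}{dy}\Big] + m(N-m)\,u,
\]
i.e., $w\cdot \cD_m u = (pu')' + m(N-m)\,wu$. From the recursion of Lemma~\ref{diffeq} one sees by induction on $l\geq 1$ that $\deg F_l \leq m_l$: given $\deg F_{l-1} \leq m_{l-1} < m_l$, the source term $f_j$ vanishes for $j\geq m_l$, while the coefficient $-(m_l-j)(N-m_l-j)$ vanishes at $j=m_l$, forcing $u_j=0$ for $j>m_l$. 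In particular, for $l\leq r-1$ we have $\deg F_l \leq m_l < N$, so the integrals $I_l := \int_0^\infty F_l(y)\,w(y)\,dy$ all converge.

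For $1\leq l\leq r-1$, integrating the self-adjoint form identity applied to $u = F_l$ on $(0,\infty)$ gives
\[
I_{l-1} = \int_0^\infty \cD_{m_l}(F_l)\,w\,dy = \big[p\,F_l'\big]_0^\infty + m_l(N-m_l)\,I_l.
\]
The boundary terms vanish at both ends: at $y=0$ because $p(0)=0$, and at $y=\infty$ because $pF_l' = O(y^{m_l-N}) \to 0$ since $m_l < N$. Combined with $I_0 = \int_0^\infty (1+y)^{-N-1}\,dy = 1/N$, iteration yields
\[
I_{r-1} = \frac{1}{N\prod_{l=1}^{r-1}m_l(N-m_l)}.
\]

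For the final step we exploit that $m_r = N$ kills the zeroth-order term: $w\cdot \cD_N(F_r) = (pF_r')'$. Writing $F_r = a_N y^N + (\text{lower order})$, we compute $pF_r' \to N a_N$ as $y\to\infty$ (since $F_r'\sim N a_N y^{N-1}$ and $p\sim y^{1-N}$), while $pF_r'(0)=0$. Hence
\[
I_{r-1} = \int_0^\infty (pF_r')'\,dy = N a_N,
\]
and comparing with the previous display gives $a_N = [N^2\prod_{l=1}^{r-1}m_l(N-m_l)]^{-1}$. Since this is nonzero, $\deg F_r = N$, proving the proposition. The main conceptual step is discovering the self-adjoint weight $w=(1+y)^{-N-1}$ for $\cD_m$; once it is in hand, the proof reduces to iterated integration by parts exploiting the identity $\cD_m(1) = m(N-m)$.
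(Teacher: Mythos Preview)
Your proof is correct and takes a genuinely different route from the paper's.

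The paper proceeds algebraically: after first replacing some $m_l$ by $N-m_l$ (using $\cD_m=\cD_{N-m}$) to arrange a convenient ordering, it introduces special polynomials $P_m$ (homogeneous solutions of $\cD_m$) and $Q_m$ (solutions of $\cD_mQ_m=P_m$), then proves by a case-by-case induction that each $F_l$ is a linear combination $\sum_{j=0}^l a_{j,l}p_j$ of these, tracking the coefficient $a_{0,l}$ explicitly. The final step uses a separate computation (Lemma~\ref{Qcoeff}) of the $y^N$ coefficient of $Q_0$.

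Your approach is analytic and considerably shorter. The key discovery is the Sturm--Liouville form $w\cD_m u=(pu')'+m(N-m)wu$ with $w=(1+y)^{-N-1}$, $p=y(1+y)^{-N}$, which turns the chain of equations $\cD_{m_l}F_l=F_{l-1}$ into the scalar recursion $I_{l-1}=m_l(N-m_l)I_l$ for the weighted integrals $I_l=\int_0^\infty F_l w\,dy$. The final step exploits that $m_r=N$ kills the zeroth-order term, so the integral collapses to the boundary value $Na_N$ at infinity. This bypasses entirely the introduction of $P_m$, $Q_m$, the reordering of the $m_l$, and the case analysis. One small point worth making explicit in your write-up: the degree bound $\deg F_l\le m_l$ you prove for $l\ge 1$ includes $l=r$, giving $\deg F_r\le N$; this is what justifies writing $F_r=a_Ny^N+(\text{lower order})$ and the asymptotic $pF_r'\to Na_N$. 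What the paper's approach buys, by contrast, is a more detailed structural description of all the intermediate $F_l$ in terms of the Jacobi-type polynomials $P_m$, which may be of independent interest but is not needed for the stated proposition.
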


\noindent
{\it Remarks.}  It follows easily from the discussion below (or from the
definition of the $c_{j,l}$) that $F_l$ is a  
polynomial of degree $\leq m_l$.  For $l<r$ it often happens that $\deg
F_l<m_l$.  It is easily seen from the definition of the $c_{j,l}$ (or
from \eqref{recursion}) that the lowest power of $y$ occuring in $F_l$ with
nonzero coefficient is $y^l$, and its coefficient is 1.   

We prove Proposition~\ref{recusolve} by expressing the $F_l(y)$ in terms 
of special solutions of the differential equations.    
Let $P_m$ denote the formal power series defined by
$$
\cD_mP_m=0,\qquad P_m(0)=1.
$$
Then $P_m=P_{N-m}$ since $\cD_m= \cD_{N-m}$.  Clearly $P_0(y)=1$.  
We claim that if $m$ is an integer satisfying $0\leq m\leq N$, then 
$P_m$ is a polynomial of degree $=\min(m,N-m)$.  This is clear from
\eqref{recur} with $f=0$ since the multiplicative factor first vanishes
when $j=\min(m,N-m)$.  Up to a simple linear     
change of independent variable and overall multiplicative factor, the $P_m$
are particular instances of Jacobi polynomials.

Next observe that the same reasoning applies if $f$ is a polynomial of 
degree $<\min(m,N-m)$: the unique solution $u$ with $u(0)$ any prescribed 
value is a 
polynomial of degree $\leq \min(m,N-m)$.  The multiplicative factor
$(m-j)(N-m-j)$ 
also vanishes for $j=\max(m,N-m)$.  Again the same reasoning shows that if
$f$ is a polynomial   
of degree $<\max(m,N-m)$, then $u$ is a polynomial of degree $\leq
\max(m,N-m)$.  In particular, if $m\neq N/2$ the conditions   
$$
\cD_mQ_m = P_m,\qquad Q_m(0)=0
$$
uniquely determine a polynomial $Q_m$ of degree 
$\leq \max(m,N-m)$.  Again $Q_m=Q_{N-m}$.  In the special case $m=0$, we
have  
\begin{lemma}\label{Qcoeff}
The $y^N$ coefficient of $Q_0$ is $N^{-2}$.   
\end{lemma}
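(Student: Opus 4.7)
The plan is to apply Lemma~\ref{diffeq} directly with $m=0$ and $f=P_0$. Since $\cD_0$ has no zeroth order term, the unique power series solution of $\cD_0 u = 0$ with $u(0)=1$ is just $u \equiv 1$, so $P_0 = 1$. Thus the expansion coefficients of $f = P_0$ in \eqref{expand} are $f_0 = 1$ and $f_j = 0$ for $j \geq 1$. Writing $Q_0 = \sum_{j \geq 0} u_j y^j/(j!)^2$, the defining condition $Q_0(0) = 0$ gives $u_0 = 0$, and the recurrence \eqref{recur} with $m=0$ becomes
$$
u_{j+1} = j(N-j)\,u_j + \delta_{j,0}, \qquad j \geq 0.
$$

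Next I would iterate this recurrence. The inhomogeneity activates only at $j=0$, giving $u_1 = 1$. For $j \geq 1$ the recurrence is homogeneous, $u_{j+1} = j(N-j)\,u_j$, so by induction
$$
u_j = (j-1)!\,\prod_{k=1}^{j-1}(N-k) = (j-1)!\,\frac{(N-1)!}{(N-j)!}
$$
for $1 \leq j \leq N$. Setting $j = N$ yields $u_N = ((N-1)!)^2$, and hence the $y^N$ coefficient of $Q_0$ is $u_N/(N!)^2 = ((N-1)!/N!)^2 = 1/N^2$, as claimed.

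There is essentially no obstacle here: the lemma is a direct unwinding of the recursion in Lemma~\ref{diffeq} in the special case $m = 0$, $f = P_0 = 1$. The only mild point to keep in mind is that the factor $j(N-j)$ is nonzero for all $1 \leq j \leq N-1$, so the recursion propagates nontrivially all the way from $j=1$ up to $j=N$, giving a nonvanishing $y^N$ coefficient; the telescoping into a clean factorial expression is then automatic.
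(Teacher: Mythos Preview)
Your proof is correct and follows essentially the same approach as the paper: both start from $P_0=1$, use the recursion \eqref{recur} with $m=0$ and $u_0=0$ to get $u_1=1$, iterate to obtain $u_j=(j-1)!\,(N-1)(N-2)\cdots(N-j+1)$, and conclude $u_N=((N-1)!)^2$ so that the $y^N$ coefficient is $N^{-2}$.
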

\begin{proof}
We have $P_0=1$.  So \eqref{recur} with $j=0$ and $u_0=0$ gives $u_1=1$.
Setting $m=0$ and iterating \eqref{recur} for higher $j$ gives
$$
u_j = (j-1)!(N-j+1)(N-j+2)\cdots(N-1).
$$  Hence $u_N=(N-1)!^2$.  The result now follows from \eqref{expand}.
\end{proof}

\noindent
{\it Proof of Proposition~\ref{recusolve}.}
Begin by observing that the definition of the $F_l$ and the conclusion both
remain unchanged if any $m_l$ is replaced by $N-m_l$.  We use this
observation to redefine some of the $m_l$.  Namely, if $1\leq l\leq r-1$
and $m_l$ satisfies the two conditions that $m_l>N/2$ and for no $k$ is it
the case that $m_k=N-m_l$, then we replace $m_l$ by $N-m_l$.  The new
sequence of $m_l$ need no longer be increasing but that will be 
irrelevant; it suffices to prove the statement of the theorem with the
$F_l$ defined using these 
$m_l$.  It is still the case that all $m_l$ are distinct, and
we now have the property that if for some $l$ one has $m_l>N/2$, then
necessarily there is $k<l$ for which $N-m_l=m_k$.  

For convenience, let us set $m_0=0$ and enlarge the set of $m$'s to include
$m_0$.  Then $m_0=0$ and $m_r=N$ are both in
our enlarged set of $m$'s, and now the property stated above that if
$m_l>N/2$, then there is $k<l$ for which $N-m_l=m_k$ holds also for
$l=r$.  

Define polynomials $p_l$, $0\leq l\leq r$, as follows:
\[
\begin{array}{ll}
p_l=P_{m_l}\quad &  \text{   if  }\,\,\, m_l\leq N/2\\
p_l=Q_{m_l}\quad &  \text{   if  }\,\,\, m_l> N/2. 
\end{array}
\]
Clearly $\deg p_l\leq m_l$.

\bigskip

\noindent
{\bf Claim:}  There are constants $a_{j,l}$ for $0\leq l\leq r$, $0\leq 
j\leq l$, satisfying  
\begin{equation}\label{Fclaim}
F_l=\sum_{j=0}^l a_{j,l} p_j,\qquad 0\leq l\leq r
\end{equation}
\begin{equation}\label{a0induct}
a_{0,l}=\left[\prod_{j=1}^lm_j(N-m_j)\right]^{-1},\qquad 0\leq l\leq r-1
\end{equation}
\begin{equation}\label{arr}
a_{r,r}=\left[\prod_{l=1}^{r-1}m_l(N-m_l)\right]^{-1}.
\end{equation}

\bigskip
\noindent
In \eqref{a0induct} and \eqref{arr} an empty product is interpreted as 1. 

Proposition~\ref{recusolve} follows immediately from the Claim.  In fact,
all $p_j$ for 
$0\leq j\leq r-1$ have degree $<N$ and $p_r=Q_0$ has degree $=N$ by
Lemma~\ref{Qcoeff}.  Thus \eqref{Fclaim} for $l=r$ together with 
\eqref{arr} show that $F_r$ has
degree $=N$.  Only $p_r=Q_0$ contributes to its $y^N$ coefficient, which by
Lemma~\ref{Qcoeff} is $N^{-2}a_{r,r}$.  

The Claim is proved by induction on $l$.  It is clear for $l=0$ since 
$F_0=p_0=1$.
Suppose that the Claim is established for $l-1$ and assume first that  
$l<r$.  The argument is slightly different for the last induction step
passing from $l=r-1$ to $l=r$.  

Now $F_l$ is defined by
$$
\cD_{m_l}F_l=F_{l-1}=\sum_{j=0}^{l-1}a_{j,l-1}p_j,\qquad F_l(0)=0.  
$$
For
each $j$, $0\leq j\leq l-1$, we will solve $\cD_{m_l}u_j=p_j$, $u_j(0)=0$,
with $u_j$ a linear 
combination of the $p_k$, $0\leq k\leq l$.  Then
$F_l=\sum_{j=0}^{l-1}a_{j,l-1}u_j$ is of the desired form.  

The construction of the $u_j$'s is based on the observation 
\begin{equation}\label{Dshift}
\cD_{m_l}=\cD_{m_j} +[m_l(N-m_l)-m_j(N-m_j)].
\end{equation}
Consider different cases for $j$.  If $m_j\leq N/2$ and 
$m_j(N-m_j)\neq m_l(N-m_l)$, then $p_j=P_{m_j}$ solves $\cD_{m_j}p_j=0$.
Hence \eqref{Dshift} gives 
$$
\cD_{m_l}\left([m_l(N-m_l)-m_j(N-m_j)]^{-1}p_j\right)=p_j.
$$
Correct the value at $y=0$ by subtracting a multiple of the
solution of the homogeneous equation:  set 
$$
u_j=[m_l(N-m_l)-m_j(N-m_j)]^{-1}(p_j-P_{m_l}).
$$
Clearly $u_j$ solves the equation and the initial condition.  Now $P_{m_l}$
is of the form $p_k$ for some $k$ with $1\leq k\leq l$:  if $m_l\leq N/2$
then 
$P_{m_l}=p_l$, while if $m_l>N/2$, then $P_{m_l}=p_k$, where
$k<l$ is the index such that $N-m_l=m_k$.  Thus we have constructed $u_j$
of the desired form in this case.  Note that if $j=0$, then $m_j=0$ and our
solution is $u_0=[m_l(N-m_l)]^{-1}(p_0-P_{m_l})$.  The
coefficient of $p_0$ is $[m_l(N-m_l)]^{-1}$, and $p_0$ has
coefficient zero when any of the $u_j$ with $j>0$ is expressed as a linear
combination of the $p$'s.  

Next consider the construction of $u_j$ in case $m_j\leq N/2$ but 
$m_j(N-m_j)= m_l(N-m_l)$.  This case might not occur at all, and if it does
it can occur for only one $j$.  Since $j<l$ we have $m_j\neq m_l$, so it
must be that $m_l>N/2$ and $m_j=N-m_l$.  Therefore $p_j=P_{m_j}$
and $p_l=Q_{m_l}$.  Since $\cD_{m_l}Q_{m_l}=P_{m_l}=P_{m_j}$ and  
$Q_{m_l}(0)=0$, we just take $u_j=Q_{m_l}=p_l$.  $p_0$ does not occur in
the expression of this $u_j$ as a linear combination of the $p$'s.   

The remaining possibility is $m_j>N/2$.  Now we need to solve 
$\cD_{m_l}u_j=p_j=Q_{m_j}$.  Once again we apply \eqref{Dshift} to
conclude that 
\[
\begin{split}
\cD_{m_l}Q_{m_j}&=\cD_{m_j}Q_{m_j} +[m_l(N-m_l)-m_j(N-m_j)]Q_{m_j}\\
&=P_{m_j}+[m_l(N-m_l)-m_j(N-m_j)]Q_{m_j}.  
\end{split}
\]
Since $j<l$ it is impossible that $m_l=N-m_j$.  Therefore 
$m_l(N-m_l)-m_j(N-m_j)\neq 0$.  Arguing exactly as in the first case
above we conclude that we can solve 
$\cD_{m_l}v_j=P_{m_j}$, $v_j(0)=0$, with $v_j$ a linear combination of
the $p_k$ for $1\leq k\leq l$.  Then we take 
\[
\begin{split}
u_j&=[m_l(N-m_l)-m_j(N-m_j)]^{-1}(Q_{m_j}-v_j)\\
=&[m_l(N-m_l)-m_j(N-m_j)]^{-1}(p_j-v_j).
\end{split}
\]
Once again, $p_0$ has coefficient zero 
when $u_j$ is expressed as a linear combination of the $p$'s.    

This concludes the induction step for $l<r$: 
$F_l=\sum_{j=0}^{l-1}a_{j,l-1}u_j$ is of the desired form.  Since $p_0$
only entered in 
the construction of $u_0$, and its coefficient in $u_0$ was  
$[m_l(N-m_l)]^{-1}$, we have 
$$
a_{0,l}=[m_l(N-m_l)]^{-1}a_{0,l-1}.
$$
Thus \eqref{a0induct} follows by induction as well.

Finally consider the last inductive step, passing from $r-1$ to $r$.  
Now $m_l=N$ so $N-m_l=m_0=0$.  We again divide $\{j:0\leq j\leq r-1\}$ 
into the same three cases as above and solve for the $u_j$ using the same 
methods.  The difference now is that $j=0$ occurs in the second case  
instead of the first, since $m_0(N-m_0)=m_r(N-m_r)$.  So $u_0=Q_0=p_r$.   
In no other $u_j$ does $p_r$ occur with nonzero coefficient.  {From} 
$F_r=\sum_{j=0}^{r-1}a_{j,r-1}u_j$ we therefore deduce $a_{r,r}=a_{0,r-1}$,
which gives \eqref{arr}. 
\stopthm

This completes the proof of Theorem~\ref{combthm} and thus of
\eqref{beforechange}.  It remains to prove Proposition~\ref{modrecurcons}.   
It is evident upon expanding the $\cR_k$'s that the left-hand side of 
\eqref{Rform} is a linear combination of $\cM_{2I}(f)$.  Again make the 
change of variables \eqref{cov}.
Then \eqref{Rform} becomes 
\[
\begin{split}
\cL_{1-N}\cL_{3-N}\cdots \cL_{N-3}(s^{a-1})|_{s=0}  
&=\sum_{|I|=N-a}n_{(I,a)}(a-1)!^2(I_1-1)!^2\ldots (I_r-1)!^2\,\,x_I\\
&=\sum_{|I|=N-a}\nb_{(I,a)}\,\,x_I.
\end{split}
\]
But this is equivalent to \eqref{comb}, which stated
$$
\cL_{1-N}\cL_{3-N}\cdots \cL_{N-3}\cL_{N-1}1|_{s=0}  
=\sum_{|J|=N}\nb_J\,x_J
=\sum_{|(I,a)|=N}\nb_{(I,a)}\,\,x_Ix_a,  
$$
as one sees upon evaluating $\cL_{N-1}1=X(s)=\sum_{a\geq 1}x_as^{a-1}$.

\section{Recursive Formulae}\label{recursive}
In this section we present the proofs of Theorems~\ref{GJMSrecursive} and  
\ref{Qrecursive}.  First consider Theorem~\ref{GJMSrecursive}.  
Since $n_{(N)}=1$, \eqref{opexform} can  
be written as $P_{2N}=\cM_{2N}+\sum_{|I|=N, I\neq (N)}n_I\cM_{2I}$.  The
second term on the right-hand side only involves $\cM_{2M}$ with $M<N$.
Thus this is a polynomial lower-triangular system, and it follows  
that there are constants $a_I$ determined inductively by inverting this
relation so that 
$\cM_{2N}=P_{2N}+\sum_{|I|=N, I\neq (N)}a_IP_{2I}$.  Observe that
\eqref{oprecform} is another relation of this same form.  \S 2 of
\cite{J3} presents a proof due to Krattenthaler that  
\eqref{opexform} and \eqref{oprecform} are inverse relations in the 
other direction. Specifically, Krattenthaler showed that if    
$\overline{\cM}_{2N}$ are defined by
\begin{equation}\label{defMbar}
\overline{\cM}_{2N}=\sum_{|I|=N}m_IP_{2I}, 
\end{equation}
then 
\begin{equation}\label{Mbar}
P_{2N}=\sum_{|I|=N}n_I\overline{\cM}_{2I}.
\end{equation}
Our desired 
identity \eqref{oprecform} follows from the uniqueness of the inverse.
Concretely, from \eqref{Mbar} one deduces 
$\overline{\cM}_{2N}=P_{2N}+\sum_{|I|=N, I\neq (N)}a_IP_{2I}$ by precisely
the same inductive inversion as for the $\cM_{2N}$.  Hence 
$\overline{\cM}_{2N}=\cM_{2N}$, and \eqref{oprecform} follows.    

We review Krattenthaler's proof of \eqref{Mbar} as presented in \S 2 of
\cite{J3} as a warm-up for the proof of Theorem~\ref{Qrecursive}.   
Substitution of \eqref{defMbar} into \eqref{Mbar} shows that \eqref{Mbar}
is equivalent to 
\begin{equation}\label{Preduced}
P_{2N}=\sum_{|I|=N}\sum_{|J_1|=I_1,\ldots,|J_r|=I_r}
n_Im_{J_1}\cdots m_{J_r}P_{2J_1}\cdots P_{2J_r}. 
\end{equation}
The coefficient of $P_{2N}$ on the right-hand side is 1, so one is reduced
to showing that for $K=(K_1,\ldots,K_s)$ with $s>1$, the coefficient of
$P_{2K}$ in \eqref{Preduced} vanishes.  Given $K$, the choice 
of $J$'s corresponds to a choice of subset $A=\{a_1,\ldots,a_{r-1}\}$ of 
$[s-1]=\{1,\ldots,s-1\}$ (including the empty set) of cardinality
$r-1$, which we order by 
$1\leq a_1<a_2<\ldots<a_{r-1}\leq s-1$.  The parameterization is 
\begin{equation}\label{Js2}
\begin{split}
J_1=(K_{1},&\ldots,K_{a_1}),\quad J_2=(K_{a_1+1},\ldots,K_{a_2}),\;\ldots,\\
J_{r-1}&=(K_{a_{r-2}+1},\ldots,K_{a_{r-1}}), \quad
J_r=(K_{a_{r-1}+1},\ldots,K_s). 
\end{split}
\end{equation}
The $J$'s determine $I$ by $I=(|J_1|,\ldots, |J_r|)$.  
The coefficient of $P_{2K_1}\cdots P_{2K_s}$ 
is then 
\begin{equation}\label{Kcoeff2}
\sum_{A\subset [s-1]}n_{I}m_{J_1}\ldots m_{J_r},
\end{equation}
so \eqref{Mbar} reduces to showing that this vanishes for all 
$(K_1,\ldots,K_s)$ with $s>1$.  

Sums such as \eqref{Kcoeff2} can be evaluated using the following 
ingenious lemma of Krattenthaler.   
\begin{lemma}\label{Klemma}
Let $s>1$ and let $K_1,\ldots,K_s\in \N$.  Set $|K|=\sum_{j=1}^sK_j$.  For  
$A=\{a_1,\ldots,a_{r-1}\}\subset [s-1]$, define $J_1,\ldots, J_r$ and $I$
as above.  Then 
\begin{equation}\label{Kiden}
\begin{split}
\sum_{A\subset [s-1]}(-1)^rI_1\cdots I_{r-1}&(I_r+X)\cdot
\frac{\prod_{a\in A}(K_a+K_{a+1}+Y\chi(a=s-1))}
{\prod_{i=1}^{r-1}(\sum_{k=1}^iI_k)(\sum_{k=i+1}^rI_k)}\\ 
=&\;\frac{X(|K|-K_s)+Y(K_s+X)}{|K|-K_1}.
\end{split}
\end{equation}
Here $\chi(\mathcal{S})=1$ if $\mathcal{S}$ is true and
$\chi(\mathcal{S})=0$ otherwise.  $X$ and $Y$ are formal variables; the
identity holds as polynomials in $X$ and $Y$.  
\end{lemma}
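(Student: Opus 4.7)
My plan is to prove Lemma~\ref{Klemma} by induction on $s$, with the base case $s=2$ handled by direct computation. In the base case the sum has only two terms: $A = \emptyset$ (with $r=1$, $I_1 = K_1+K_2$) gives $-(K_1+K_2+X)$, and $A = \{1\}$ (with $r=2$, $I_1=K_1$, $I_2=K_2$) gives $(K_2+X)(K_1+K_2+Y)/K_2$. Adding these and simplifying yields $\bigl(XK_1 + Y(K_2+X)\bigr)/K_2$, which matches the RHS since $|K|-K_2 = K_1$ and $|K|-K_1 = K_2$.

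For the inductive step I would split the sum $\sum_{A \subset [s-1]}$ according to whether $s-1 \in A$. When $s-1 \in A$, write $A = B \cup \{s-1\}$ with $B \subset [s-2]$; the last block becomes the singleton $J_r = (K_s)$, so $I_r = K_s$, the factor $(K_{s-1}+K_s+Y)$ appears in the numerator, and the final denominator factor is $(|K|-K_s)K_s$. Pulling these pieces out (together with the sign change $(-1)^r = -(-1)^{r-1}$), the remaining inner sum over $B \subset [s-2]$ has the same general shape as the LHS of the identity at size $s-1$. When $s-1 \notin A$, all $Y$ contributions vanish since $\chi(a=s-1)=0$, and the last block always contains $K_s$. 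The target is to show that the two parts combine to produce the RHS $\bigl(X(|K|-K_s) + Y(K_s+X)\bigr)/(|K|-K_1)$.

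The main obstacle is that neither reduced sum is literally an instance of the identity at size $s-1$: when $s-1 \in A$ the remaining denominators $\sum_{k=i+1}^r I_k$ for $i < r-1$ are still computed relative to $|K|$ rather than $|K|-K_s$, so $K_s$ remains coupled through the inner sum; and when $s-1 \notin A$ the last block can have size greater than one. To circumvent this I would exploit the observation that both sides are polynomials of degree at most one in each of $X$ and $Y$ (the LHS manifestly, because $X$ enters only through $I_r + X$ and $Y$ enters only through the unique factor with $\chi(a=s-1)\neq 0$), so the identity is bilinear in $(X,Y)$ and reduces to four scalar identities obtained by extracting the coefficients of $1, X, Y, XY$. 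Each scalar identity has the same structural form and can be handled by a simultaneous induction on $s$. Alternatively, one could view the LHS as a rational function of $K_s$ with the other $K_i, X, Y$ as parameters, identify the poles at the zeros of $\sum_{k=i+1}^r I_k$, verify that the non-$(|K|-K_1)$ poles all cancel, and then match the residue at $K_s = K_1 - K_2 - \cdots - K_{s-1}$ against the RHS.
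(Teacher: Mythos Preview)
Your base case $s=2$ is correct, and your decomposition of $\sum_{A\subset[s-1]}$ according to whether $s-1\in A$ is a sound starting point --- in fact it leads to a clean proof.  The gap is in what follows.  You correctly observe that the reduced sums are not \emph{literally} the identity at size $s-1$ for $(K_1,\ldots,K_{s-1})$, but you draw the wrong conclusion.  The fix is not to extract coefficients or pass to residues; it is to recognize that both reduced sums \emph{are} instances of the size-$(s-1)$ identity once you merge the last two entries and choose the free parameters appropriately.

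Concretely, set $K'=(K_1,\ldots,K_{s-2},K_{s-1}+K_s)$, so $|K'|=|K|$.  For the part with $s-1\notin A$ one has $A\subset[s-2]$; the $I$-values and the denominators are unchanged under this merge, the $\chi$-factor is never triggered, and the numerator factor at $a=s-2$ reads $K_{s-2}+K_{s-1}=K'_{s-2}+K'_{s-1}-K_s$.  Hence this part is exactly the left side of \eqref{Kiden} at size $s-1$ for $K'$ with $(X',Y')=(X,-K_s)$.  For the part with $s-1\in A$, after extracting the factor $(K_s+X)(K_{s-1}+K_s+Y)\big/[(|K|-K_s)K_s]$ and the sign, the inner sum over $B\subset[s-2]$ carries the extra $+K_s$ in every factor $\sum_{k>i}I_k$; absorbing $K_s$ into $K'_{s-1}$ again identifies it with the left side of \eqref{Kiden} at size $s-1$ for $K'$ with $(X',Y')=(-K_s,-K_s)$.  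Applying the induction hypothesis in each case and adding gives the right side of \eqref{Kiden}.

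This is precisely why the free variables $X,Y$ appear in the statement: they are what makes the induction close, as the paper notes explicitly.  It is also why your workaround of extracting the four scalar coefficients of $1,X,Y,XY$ and inducting on each separately fails.  The reduced sums at size $s-1$ require parameter values (namely $-K_s$) that differ from the ones you started with, so the coefficient identities do not close under induction individually; you are forced back to the full bilinear statement.  The residue alternative is only a gesture, not a proof.  The paper's proof (following Krattenthaler) uses the closely related decomposition by the last element of $A$; your two-piece splitting is essentially a streamlined variant of it, but you stopped one observation short of making it work.
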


\noindent
This is Lemma 2.1 in \cite{J3}.  The proof is by induction on $s$,
decomposing the set of subsets $A\subset [s]$ according to their last
element.  The proof is not at all obvious, but the real ingenuity was to
introduce 
the variables $X$ and $Y$ and to find the identity \eqref{Kiden} 
amenable to a proof by induction.  For 
the purposes of this paper it suffices to know \eqref{Kiden} in the case
$X=Y$.  An examination 
shows that the proof by induction actually applies to this case directly;
it is not necessary for our purposes to introduce both independent 
variables $X$ and $Y$.  We rewrite the identity for the case $X=Y$ in the
form we will need it in the proof of Theorem~\ref{Qrecursive}.  
Setting $X=Y=-b$ and replacing $K_s$ by $K_s+b$ in \eqref{Kiden} gives 
\begin{equation}\label{Kidenb}
\sum_{A\subset [s-1]}(-1)^rI_1\cdots I_r
\frac{\prod_{a\in A}(K_a+K_{a+1})}
{\prod_{i=1}^{r-1}(\sum_{k=1}^iI_k)(\sum_{k=i+1}^rI_k+b)}
=-\frac{b|K|}{|K|-K_1+b}.  
\end{equation}
This holds also for $s=1$, since in that case both sides are $-K_1$.  As  
usual, empty products are interpreted as 1.  The form \eqref{Kidenb} seems
natural:  the induction hypothesis arises natually in its proof by
induction and the function $\chi(a=s-1)$ does not appear.   

We use Lemma~\ref{Klemma} to finish the proof of \eqref{Mbar}.  
Substitution of the definitions \eqref{nm} of $n_I$ and the $m_{J_i}$ into  
\eqref{Kcoeff2} shows that 
$$
\sum_{A\subset [s-1]}n_{I}m_{J_1}\ldots m_{J_r}=
(-1)^s(|K|-1)!^2\prod_{j=1}^s\frac{1}{K_j!(K_j-1)!}
\prod_{j=1}^{s-1}\frac{1}{K_j+K_{j+1}}\cdot \Sigma,  
$$
where $\Sigma$ is the expression occurring on the left-hand side of
\eqref{Kiden} with $X=Y=0$.  Lemma~\ref{Klemma} (or \eqref{Kidenb} with
$b=0$) shows that this vanishes. 
Thus \eqref{Mbar} follows, and hence also Theorem~\ref{GJMSrecursive}.      

We turn now to the proof of Theorem~\ref{Qrecursive}.  
Recall that the scalar invariants $W_{2N}$ are defined by \eqref{Wdef}. 
It will be convenient to introduce 
$$
\Wb_{2N}= 2^{2N}N!(N-1)!\,W_{2N},\qquad N\geq 1
$$
so that \eqref{Qexform} takes the form
\begin{equation}\label{Qexform2}
(-1)^NQ_{2N}=\sum_{|(I,b)|=N}n_{(I,b)}\cM_{2I}(\Wb_{2b})
\end{equation}
and \eqref{Qrecform} becomes
$$
\Wb_{2N}=\sum_{|(L,d)|=N}m_{(L,d)}(-1)^dP_{2L}(Q_{2d}).
$$
Substitution of \eqref{Qexform2} for each $(-1)^dQ_{2d}$ shows that
\eqref{Qrecform} is equivalent to
$$
\Wb_{2N}=\sum_{|(L,d)|=N}\sum_{|(I,b)|=d}m_{(L,d)}n_{(I,b)}P_{2L}\cM_{2I}(\Wb_{2b}).
$$
The term on the right-hand side with $L=I=\emptyset$ is $\Wb_{2N}$,  
so it suffices to prove 
$$
\sum_{|(L,d)|=N}\sum_{|(I,b)|=d}m_{(L,d)}n_{(I,b)}P_{2L}\cM_{2I}=0
$$
for each fixed $b$ such that $1\leq b<N$.  
Substitution of \eqref{oprecform} for each $\cM_{2I_j}$ rewrites this as 
\begin{equation}\label{bigsum}
\sum_{|(L,d)|=N}\sum_{|(I,b)|=d}
\sum_{|J_1|=I_1,\ldots,|J_r|=I_r}
m_{(L,d)}n_{(I,b)}m_{J_1}\cdots m_{J_r}P_{2L}P_{2J_1}\cdots P_{2J_r}=0.
\end{equation}

Fix $K_1,\ldots, K_s$ with $s\geq 1$ and each $K_j\geq 1$ and consider the 
coefficient of   
$P_{2K_1}\cdots P_{2K_s}$ in \eqref{bigsum}.  We must have 
$L=(K_1,\ldots, K_p)$ for some $p$, $0\leq p\leq s$.  Each $|J_i|\geq 1$, 
although $r=0$ is allowed corresponding to $p=s$.  For $p<s$, the 
choice of $J$'s corresponds to a choice of subset
$A=\{a_1,\ldots,a_{r-1}\}$ of  
$[s-p-1]=\{1,\ldots,s-p-1\}$ (including the empty set) of cardinality
$r-1$, which we order by 
$1\leq a_1<a_2<\ldots<a_{r-1}\leq s-p-1$.  Here
\begin{equation}\label{Js}
\begin{split}
J_1=(K_{p+1},&\ldots,K_{p+a_1}),\quad J_2=(K_{p+a_1+1},\ldots,K_{p+a_2}),\;\ldots,\\
J_{r-1}&=(K_{p+a_{r-2}+1},\ldots,K_{p+a_{r-1}}), \quad
J_r=(K_{p+a_{r-1}+1},\ldots,K_s). 
\end{split}
\end{equation}
For $p=s-1$, the only possibility for $A$ is the empty set, in which case  
$J_1=(K_s)$.  
The $J$'s determine $I$ by $I=(|J_1|,\ldots, |J_r|)$ as above.  
The coefficient of $P_{2K_1}\cdots P_{2K_s}$ is then
\begin{equation}\label{Kcoeff}
m_{(K,b)}+\sum_{p=0}^{s-1}m_{(L,|K|-|L|+b)}\sum_{A\subset
  [s-p-1]}n_{(I,b)}m_{J_1}\ldots m_{J_r}.
\end{equation}
So Theorem~\ref{Qrecursive} reduces to showing that this vanishes for all 
$b\geq 1$ and all $(K_1,\ldots,K_s)$ with $s\geq 1$.  

We use Lemma~\ref{Klemma} in the form \eqref{Kidenb} to evaluate the inner
sum.  Set  
$K'_j=K_{p+j}$ for $1\leq j\leq s-p$.  Substitution of  
\eqref{nm} for $n_{(I,b)}$ and the $m_{J_i}$ shows that
\begin{equation}\label{innersum}
\begin{split}
\sum_{A\subset [s-p-1]}n_{(I,b)}&m_{J_1}\ldots m_{J_r} \\
=&(-1)^{s-p}\frac{(|K'|+b-1)!^2}{|K'|\,b!\,(b-1)!}
\prod_{j=1}^{s-p}\frac{1}{K'_j!(K'_j-1)!} 
\prod_{j=1}^{s-p-1}\frac{1}{K'_j+K'_{j+1}}\cdot \Sigma
\end{split}
\end{equation}
where
$$
\Sigma=\sum_{A\subset [s-p-1]}(-1)^rI_1\cdots I_r
\frac{\prod_{a\in A}(K'_a+K'_{a+1})}
{\prod_{i=1}^{r-1}(\sum_{k=1}^iI_k)(\sum_{k=i+1}^rI_k+b)}. 
$$
Replacement of $s$ by $s-p$ and $K_j$ by $K_j'$ in \eqref{Kidenb} shows
that   
\begin{equation}\label{sigma}
\Sigma=-\frac{b|K'|}{|K'|-K_{p+1}+b}.  
\end{equation}

Substitute \eqref{sigma} into \eqref{innersum} and multiply by
$m_{(L,|K|-|L|+b)}$.  One obtains
\begin{equation}\label{msum}
\begin{split}
m_{(L,|K|-|L|+b)}&\sum_{A\subset [s-p-1]}n_{(I,b)}m_{J_1}\ldots m_{J_r} \\
=&(-1)^{s+1}\frac{(|K|+b)!(|K|+b-1)!}{(b-1)!^2}
\prod_{j=1}^s\frac{1}{K_j!(K_j-1)!} 
\prod_{j=1}^{s-1}\frac{1}{K_j+K_{j+1}}\cdot R_p
\end{split}
\end{equation}
where
$$
R_p=\frac{K_p+K_{p+1}}{(\sum_{i=p}^sK_i+b)(\sum_{i=p+1}^sK_i+b)(\sum_{i=p+2}^sK_i+b)}\;,
\qquad 1\leq p\leq s-1
$$
and
$$
R_0=\frac{1}{(\sum_{i=1}^sK_i+b)(\sum_{i=2}^sK_i+b)}.
$$
Empty sums are interpreted as 0.  

Set $b=K_{s+1}$ and substitute \eqref{msum} into \eqref{Kcoeff}.  After  
cancellation of factors in common with $m_{(K,b)}$, one finds that the 
vanishing of \eqref{Kcoeff} is equivalent to
$$
\sum_{p=1}^{s-1}\frac{K_p+K_{p+1}}
{(\sum_{i=p}^{s+1}K_i)(\sum_{i=p+1}^{s+1}K_i)(\sum_{i=p+2}^{s+1}K_i)} 
=\frac{1}{K_{s+1}(K_s+K_{s+1})}-\frac{1}{(\sum_{i=1}^{s+1}K_i)(\sum_{i=2}^{s+1}K_i)}. 
$$
This is proved by induction on $s$.  For $s=1$ the sum on the left-hand
side is empty and the right-hand side vanishes.  Suppose the identity holds
for $s$.  Write
\[
\begin{split}
\sum_{p=1}^{s}\frac{K_p+K_{p+1}}
{(\sum_{i=p}^{s+2}K_i)(\sum_{i=p+1}^{s+2}K_i)(\sum_{i=p+2}^{s+2}K_i)} 
=&\frac{K_1+K_2}
{(\sum_{i=1}^{s+2}K_i)(\sum_{i=2}^{s+2}K_i)(\sum_{i=3}^{s+2}K_i)} \\
+&\sum_{p=2}^s\frac{K_p+K_{p+1}}
{(\sum_{i=p}^{s+2}K_i)(\sum_{i=p+1}^{s+2}K_i)(\sum_{i=p+2}^{s+2}K_i)} 
\end{split}
\]
and use the induction hypothesis on the second term on the right-hand side
to obtain that the above equals 
\[
\begin{split}
&\frac{K_1+K_2}
{(\sum_{i=1}^{s+2}K_i)(\sum_{i=2}^{s+2}K_i)(\sum_{i=3}^{s+2}K_i)} 
+\frac{1}{K_{s+2}(K_{s+1}+K_{s+2})}-\frac{1}{(\sum_{i=2}^{s+2}K_i)(\sum_{i=3}^{s+2}K_i)}\\
&=\frac{1}{K_{s+2}(K_{s+1}+K_{s+2})}
+\frac{1}{(\sum_{i=2}^{s+2}K_i)(\sum_{i=3}^{s+2}K_i)}
\left(\frac{K_1+K_2}{\sum_{i=1}^{s+2}K_i}-1\right)\\
&=\frac{1}{K_{s+2}(K_{s+1}+K_{s+2})}
-\frac{1}{(\sum_{i=1}^{s+2}K_i)(\sum_{i=2}^{s+2}K_i)}.
\end{split}
\]
This completes the proof of the vanishing of \eqref{Kcoeff} and thus also 
of Theorem~\ref{Qrecursive}.


\begin{thebibliography}{BEGM}

\bibitem[FG]{FG} C. Fefferman and C. R. Graham, {\it
The Ambient Metric}, Annals of Mathematics Studies 178, Princeton
University Press, 2012, {\tt arXiv:0710.0919}.  

\bibitem[FH]{FH} C. Fefferman and K. Hirachi,
{\it Ambient metric construction of Q-curvature in conformal and CR
geometries}, Math. Res. Lett. {\bf 10} (2003), 819--831, 
{\tt arXiv:math/0303184}.

\bibitem[GJMS]{GJMS} C. R. Graham, R. Jenne, L.J. Mason and
G. A. J. Sparling,  
{\it Conformally invariant powers of the Laplacian, I:
Existence}, J. London Math. Soc. {\bf 46} (1992),  557--565.  

\bibitem[J1]{J1} A. Juhl, {\it Families of Conformally Covariant Differential
  Operators, $Q$-Curvature and Holography}, Progress in Mathematics 275,
  Birkh\"auser, 2009.  

\bibitem[J2]{J2} A. Juhl, {\it On the recursive structure of Branson's
  $Q$-curvature}, {\tt arXiv:1004.1784}.

\bibitem[J3]{J3} A. Juhl, {\it Explicit formulas for GJMS-operators and
  $Q$-curvatures}, {\tt arXiv:1108.0273}.  


\end{thebibliography}
\end{document}